\crefname{hypothesis}{Hypothesis}{Hypotheses}
\crefname{fact}{Fact}{Facts}
\title{An Example Article\thanks{Submitted to the editors DATE.
\funding{This work was funded by the Fog Research Institute under contract no.~FRI-454.}}}
\author{Nilima Nigam\thanks{Department of Mathematics, Simon Fraser University, Burnaby, BC V5A 1S6, Canada.}
\and Kshitij Patil\footnotemark[1]~\thanks{Email: \email{kap15@sfu.ca}.}
\and Weiran Sun\footnotemark[1]}
\title{The spectrum of the Steklov-Helmholtz operator}
\renewcommand{\d}[0]{\text{d}}
\newcommand{\bb}[1]{\mathbb{#1}}
\renewcommand{\d}[0]{ \text{ d} }
\newcommand{\T}{ \mathcal{T} }
\newcommand{\dontshow}[1]{}
\begin{document}

\maketitle

\begin{abstract}
We present a wavenumber-robust strategy for computing Steklov eigenpairs of the Helmholtz operator $-\Delta -\mu^2$. As the wavenumber $\mu \rightarrow \mu_D$ from below  (where $\mu_D^2 $ is a Dirichlet-
Laplace eigenvalue of multiplicity $\ell$), the lowest $\ell$ Steklov-Helmholtz
eigenvalues diverge to $-\infty$. Computationally, the Steklov-Helmholtz eigenvalue problem becomes severely ill-conditioned when $\mu \approx \mu_D$.

We first reformulate the problem in terms of a suitably-defined Dirichlet-to-Neumann map. We then use an indirect approach based on a single layer ansatz.  The discrete single layer matrix is nearly singular
close to exceptional wavenumbers, and we use a reduced singular value decomposition to avoid the consequent ill-conditioning. For smooth domains, convergence of our eigenvalue solver is spectral. We use this method (called the BIO-MOD approach) for
shape optimization of scale-invariant Steklov-Helmholtz problems and prove that
the disk maximizes the second eigenvalue under appropriate scaling. For curvilinear polygons, we use polynomially-graded meshes rather than uniform meshes. As a proof of
concept, we also implemented BIO-MOD using RCIP quadratures (using the {\tt ChunkIE} implementation). The BIO-MOD approach successfully removes ill-conditioning near exceptional wavenumbers, and  very high eigenvalue accuracy (up to 10
digits for polygons, arbitrary precision accuracy for smooth domains) is observed.  

We deploy {our} approach to computationally study the spectral geometry of the Steklov-Helmholtz operator, including some questions about spectral asymptotics and spectral optimization.
\end{abstract}

\section{Introduction}
The spectra of elliptic operators are of considerable theoretical and practical interest. There is an extensive literature on accurately computing the Dirichlet and Neumann eigenvalues of the Laplacian, particularly for bounded domains.

The Steklov problem for the Laplace operator was first introduced by Steklov in a talk at the Kharkov  Mathematical society in December 1895, and was a major topic in his dissertation. In its simplest form, the spectral problem he considered can be stated as: 
{\it  On a bounded domain $\Omega\in \mathbb{R}^2$ find a non-trivial eigenfunction $u: \Omega \rightarrow \mathbb{R}$ and associated real eigenvalue $\sigma^L$  so that} 
\begin{equation}
	-\Delta u=0\quad \, \text{in}\, \Omega, \quad \text{and}\,\,  \frac{\partial u}{\partial n}  = \sigma^L \T(u) \, \quad \text{on the boundary}\, \, \Gamma.\label{eq:steklov-laplace}
\end{equation} Here $n$ is the unit outward normal to $\Gamma$, the boundary of the domain, and  $\T $ denotes the Dirichlet trace operator. 

We record some well-known facts concerning the {\it Steklov-Laplace} problem \autoref{eq:steklov-laplace}. First, it is clear the eigenfunctions are harmonic. Next, under relatively mild assumptions on the boundary smoothness, the Steklov-Laplace spectrum is countable and discrete, with $0=\sigma^L_0<\sigma^L_1\leq \sigma^L_2\leq...$, accumulating only at infinity. 

In this paper we present a computational strategy for computing the eigenvalues of the {\it Steklov-Helmholtz} operator for bounded Lipschitz domains $\Omega\subset \mathbb{R}^2$. The Steklov-Helmholtz problem for a fixed real {\it wavenumber $\mu$} can be stated as:\\
{\sc Problem I:} {\it  Find a suitably regular non-trivial function $u$ on $\Omega$ and an associated eigenvalue $\sigma \in \mathbb{R}$ so that }
\begin{equation}\label{eq:steklov-helmholtz}
		-\Delta u - \mu^2 u =0\, \quad \text{in} \, \,\Omega,\qquad 
		\frac{\partial u}{\partial n} =\sigma \T(u) \quad \text{on}\,\, \Gamma.
\end{equation} 

{\sc Problem I} presents some interesting features and challenges compared to \autoref{eq:steklov-laplace}.  First, solutions of {\sc Problem I} \eqref{eq:steklov-helmholtz} must satisfy the Helmholtz equation $-\Delta u - \mu^2u=0$ for a given fixed wavenumber $\mu$. We expect $u$ to oscillate on a scale set by $\mu$ as well as $\sigma$. If $\mu$ is large, these oscillations may need fine computational meshes even to resolve low eigenmodes. 

A further interesting feature of Problem I \eqref{eq:steklov-helmholtz} is that the eigenvalues $\sigma$ may be negative. This can immediately be seen by noting the Rayleigh quotient for \eqref{eq:steklov-helmholtz} may lose (semi-)definiteness:
\begin{align}\label{eq:RQ}
	\mathcal{R}[\mu,u]:= \frac{\int_{\Omega} |\nabla u|^2 - \mu^2 \int_{\Omega}|u|^2}{\int_{\Gamma} |u|^2}.
\end{align}

Next, $\mu^2$ could be an interior Dirichlet-Laplace eigenvalue. In this case the Dirichlet boundary value problem for the Helmholtz operator $-\Delta - \mu^2$ is not uniquely solvable. In what follows, we denote
\begin{equation}\label{eq:notationspecD}
	\text{spec}_D(\Omega):=\{\lambda, \quad \text{an interior Dirichlet eigenvalue of }\, -\Delta\}.
\end{equation}
We say the wavenumber $\mu_D$ is an {\it exceptional value} for {\sc Problem I} \eqref{eq:steklov-helmholtz} iff $\mu_D^2 \in \text{spec}_D(\Omega)$.

We see that if $\mu=\mu_D$ then the eigenvalue  {\sc Problem I} \eqref{eq:steklov-helmholtz} is not meaningful as stated, in the sense that the solution operator of the related boundary value problem is not uniquely solvable, and one must interpret the eigenvalue problem with care. Computationally, the conditioning of the related discrete problem degenerates as $\mu^2 $ gets close to $\mu^2_D\in \text{spec}_D(\Omega)$. In fact, as $\mu^2 \rightarrow  \mu_D^2$ (a Dirichlet eigenvalue of $-\Delta$), the corresponding Steklov-Helmholtz eigenvalue $\sigma \rightarrow - \infty$, with the same multiplicity  (see, eg., Section 7.4 in \cite{Levitin23}). 

These issues can be illustrated by examining {\sc Problem I} when $\Omega$ is the unit disk. In this case, the Steklov-Helmholtz eigenvalues are $ \sigma_k = \mu \frac{J_n'(\mu)}{J_n(\mu)}$ and the eigenfunctions are $J_n(\mu r)\exp(in\theta)$, where $J_n(t)$ is the Bessel function of order $n$. We note the $kth$ eigenvalue does not necessarily correspond to $k=n$, while the oscillations in the angular variable will depend on $n$. This effect is observed in \autoref{fig:circlen500mu71k2real}, where {the second ($k=2$) Steklov-Helmholtz eigenmode on the disk corresponds to $n = 1$ for $\mu = 7.015$ and  $n=4$ for $\mu = 7.1$}. In contrast, a second Steklov-Laplace eigenmode, corresponding to $\sigma_2^L= 1$, oscillates as $\exp(i \theta)$ in the angular variable. On the disk, the first eigenmode of the Steklov-Laplace problem has multiplicity 1 and the rest are 2. The multiplicity of the first Steklov-Helmholtz eigenmode need not be 1.  We refer to Section 1.4.2 in \cite{kshitijMS} for a discussion of this phenomenon.

\autoref{fig:circlen500mu71k2real}, also demonstrates the behaviour of Steklov eigenmodes when  $\mu$ is close to an exceptional wavenumber $\mu_D$ i.e. $\mu^2_D \in \text{spec}_D(\Omega)$. As we can see, both the eigenvalues and the eigenmodes depend on whether $\mu^2>\mu_D^2$ or not. As $\mu\rightarrow \mu_D^{-}$, $\sigma(\mu) \rightarrow -\infty$. The corresponding eigenmode appears close to a Dirichlet eigenmode for the Laplacian. 
\begin{figure}
	\centering
	\includegraphics[width=0.5\linewidth]{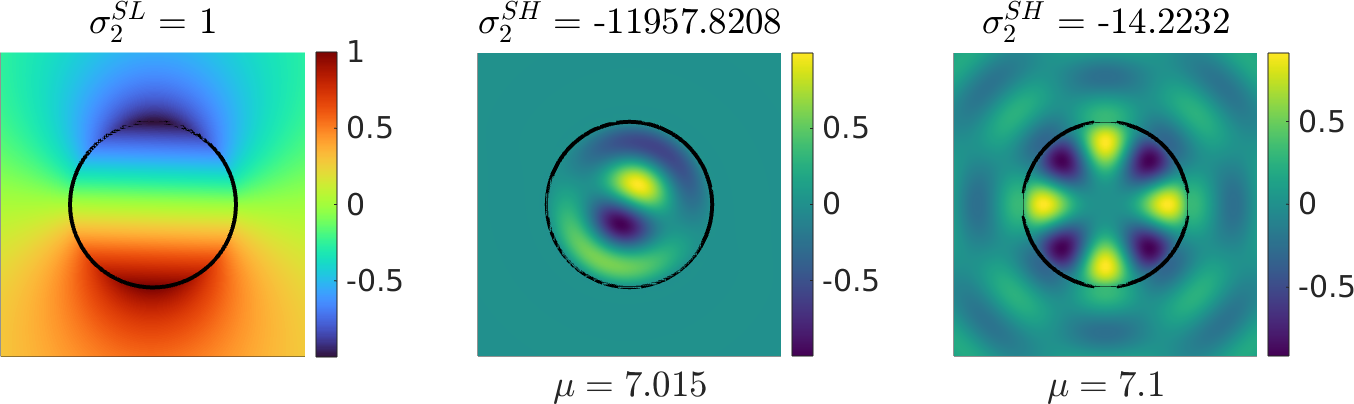}
	\caption{ (L) 2nd Steklov-Laplace eigenmode on the unit disk. (M) (Real part of) 2nd Steklov-Helmholtz eigenmode for $\mu =7.015$. Here  $\sigma=-11957.8208$ on the unit disk. (R) 2nd Steklov-Helmholtz  eigenmode for $\mu=7.1$, with $\sigma=-14.2232$.  We note that $7.0156...$ is an exceptional value.}
	\label{fig:circlen500mu71k2real}
\end{figure}

Most works in the literature concerning discretizations of {\sc Problem I} {\it require} that the wavenumber $\mu$ is non-exceptional.    By examining the Dirichlet-to-Neumann map for the Helmholtz operator, we are able to extend the definition of the Steklov-Helmholtz spectrum even for exceptional wavenumbers. This is discussed in some detail in Section 2; interested readers are also pointed to \cite{Levitin23,DNsemi}.

{\sc Problem Ia:} {\it Let $\mu\in \bb R$ be fixed and $\Omega$ be a bounded Lipschitz domain. Then,
\begin{enumerate}
\item if $\mu$ is not an exceptional wavenumber, find a function $u \in H^{1}(\Omega)$ and an associated eigenvalue $\sigma \in \mathbb{R}$ so that \autoref{eq:steklov-helmholtz} is satisfied.
\item if $\mu$ is an exceptional wavenumber, find a function $u \in H^{3/2}(\Omega)$ and an associated eigenvalue $\sigma \in \mathbb{R}$ so that \autoref{eq:steklov-helmholtz} is satisfied.
\end{enumerate}
}

We record the variational description of eigenvalues in {\sc Problem Ia} below. 

{\sc Problem Ia':} {\it Let $\mu\in\bb R$ be fixed and $\Omega$ be a bounded Lipschitz domain. For $k=1,2,...$, the $kth$ Steklov eigenvalue of $(-\Delta-\mu^2)$ is given by the following min-max characterization.
\begin{enumerate}
\item If $\mu$ is not an exceptional wavenumber,
\begin{equation}\label{eq:RRversion1}
	\sigma_{k}:= \min_{V_k \subset H_\mu(\Omega), \text{dim}(V_k)=k} \, \max_{w\in V_k, w \not=0} \mathcal{R}[\mu,w],
\end{equation}
where, $\mathcal{R}[\mu,w]$ is as defined in \autoref{eq:RQ} and $H_\mu(\Omega)\subset H^1(\Omega)$ is defined by 
$$ H_\mu(\Omega):=\{w \in H^1(\Omega)\vert   - \Delta w =\mu^2 w\}.$$

\item If $\mu = \mu_D$ is an exceptional wavenumber,
\begin{equation}\label{eq:RRversion2}
	\sigma_{k}:= \min_{V_k \subset \tilde H_\mu(\Omega), \text{dim}(V_k)=k} \, \max_{w\in V_k, w \not=0} \mathcal{R}[\mu,w].
\end{equation} 
where, $ \tilde H_\mu(\Omega) $ is the $L^2$-orthogonal complement of the space of the Dirichlet eigenfunctions at eigenvalue $\mu^2$,
 $$\tilde H_\mu(\Omega):=\{w \in H^{3/2}(\Omega)\vert   - \Delta w =\mu^2 w, \, w\vert_\Gamma =0\}.$$
\end{enumerate}
}

In practice, we may not know the exceptional values $\mu_D$ {\it a priori;} any successful computational strategy for \eqref{eq:steklov-helmholtz} must account for these. For computational efficiency, we should not independently require the solution of the Dirichlet eigenvalue problem  so as to avoid exceptional wave numbers. 

Computational strategies for Steklov problems \-- both for the Laplace and the Helmholtz operator \-- have been the subject of intense study in recent years. We briefly review some of the literature concerning finite element, spectral methods and integral-operator based approaches for the Steklov-Helmholtz problem \eqref{eq:steklov-helmholtz}.

The Steklov-Helmholtz problem for the operator $(-\Delta - \mu^2 n(x))$ given wavenumber $\mu \in \mathbb{R}$ and refractive index $n(x)$ is closely linked to  inverse problems in wave scattering.  
Here, the refractive index $n$ is allowed to be complex-valued; if $Im(n)\not=0$ the medium is absorbing. In this setting, the desired eigenvalues $\sigma$ may be complex, and require careful treatment. The inverse scattering problem is to retrieve $n(x)$ given suitable data, and one approach is to use the Steklov-Helmholtz spectrum. This connection has motivated considerable work on this (potentially non self-adjoint) version of \eqref{eq:steklov-helmholtz}, and in the literature is sometimes termed {\it the non self-adjoint} problem. An important work exploring this connection is \cite{Cakoni16}, in which a finite element approach was used to discretize the resultant eigenvalue problem. 
The location of complex eigenvalues of a non-Hermitian generalized eigenvalue system is numerically challenging, and one successful approach is via Beyn's method.  In \cite{SunTurner} a combination of a finite element discretization, and the spectral indicator method (a refinement of the recursive integral method of \cite{Huang16}) is used. Some other finite element approaches include those of 
\cite{Li22},\cite{Xie23},\cite{Zhang19},\cite{Zhang20}.

Approaches which use suitable orthogonal global bases include \cite{Wu17,Ren22, Harris21}. In \cite{Harris21}, a novel basis using Neumann-Laplace eigenfunctions are used to compute the Steklov spectrum, yielding a spectral approach. This work is within the context of inverse scattering problems, where the refractive index is allowed to be complex.

Amongst boundary-integral approaches we highlight \cite{Ma22}, in which a direct approach \--- the eigenmode being solved for is the Dirichlet trace of the eigenfunction 
\--- is used in combination with the spectral indicator method. Here, it is assumed that $\mu$ is not an exceptional value. In our work, we employ an indirect layer approach, and aim to provide a wavenumber-robust strategy.

Our work is motivated by applications from spectral geometry rather than inverse scattering, and we focus on real refractive indices. Our aim is to provide a high-accuracy, efficient and wavenumber-robust approach, which can be used within the context of shape optimization for eigenvalues. The eigenvalue problem \eqref{eq:steklov-helmholtz} is not well-defined as stated when $\mu^2\in \text{spec}_D(\Omega)$. We exploit an identification via a (carefully defined) Dirichlet-to-Neumann map \cite{Levitin23} to nonetheless capture the spectrum in this case.  

We briefly describe quadrature rules for both smooth and piecewise smooth boundaries in \autoref{Section:quadrature}, and refer to \cite{ColtonKress13} for details. Our focus in this paper is the suitable formulation of an eigenvalue problem, and addressing ill-conditioning arising from the presence of exceptional wavenumbers; any of several quadrature approaches can be used to build the discrete matrices involved. After describing the proposed algorithm, we use it to numerically investigate some questions concerning the spectral geometry of the Steklov-Helmholtz eigenvalue problem.

We finally note that when $\mu = \iota \gamma, \gamma \in \mathbb{R}$ is strictly imaginary, $(-\Delta +\gamma^2)$ is a strongly elliptic operator. Noteworthy work includes a Boundary Element Method based approach in \cite{Tang98}, an approach based on mechanical quadrature in \cite{Huang04} and recently {\cite{denis25} where asymptotic behaviour of exterior eigenvalues in validated by an Finite Element Method approach}. In this case, the spectrum is strictly positive, and the issues described above do not arise. Nonetheless, high-accuracy discretizations in this setting are of interest and relevance for the present work.

\section{The Steklov-Helmholtz EVP and reformulations}

Our approach in this paper is to use integral operators to reformulate the Steklov-Helmholtz eigenvalue problem, and then discretize the resultant (new) problem. Concretely, we use integral operators to define a problem that is isospectral to the desired EVP; the eigenfunctions of the latter can be recovered via postprocessing. In this paper we work on planar domains with Lipschitz boundaries $\Gamma$, for which the quadratures are well-known and standard.

The use of layer potentials to locate eigenvalues is not new, and is similar to the ideas in \cite{Akhmetgaliyev,Cakoni17, Ma22}. A different approach is proposed in \cite{barnett2014fast}. Here, a generalization of the Cayley transformation is used in combination with layer potentials to examine the Neumann-to-Dirichlet map on star-shaped domains; the goal in~\cite{barnett2014fast} is to efficiently compute Dirichlet eigenvalues to very high accuracy. The authors remark their approach can also be used to retrieve Steklov eigenvalues for the Helmholtz operator. 
The work \cite{Ma22}, based on a direct approach, is most closely related to the topic of our paper. In contrast to this previous work, we use an {\it indirect} approach, and obtain a related eigenproblem involving the adjoint of the double layer potential. Further, we seek an approach which works for {\it all} real wavenumbers $\mu$, avoids issues of poor conditioning, and works for a wide class of domain shapes (including on non-convex domains and curvilinear polygons). 

We  define  the familiar single layer operator $\mathcal{S}_\mu:H^{-1/2}(\Gamma)\rightarrow H^{1/2}(\Gamma)$ and the adjoint of the double layer operator, $\mathcal{K'}_\mu:H^{-1/2}(\Gamma)\rightarrow H^{-1/2}(\Gamma)$ as

\begin{eqnarray*} 
	\mathcal{S}_\mu[\phi](x)&:=&  \frac{\iota}{4}\int_{\Gamma}  H_0^1(\mu|x-y|)\phi(y) {\d} s(y)\quad x\in \Gamma \\
	\mathcal{K'}_\mu[\phi](x)&:=&\frac{\iota}{4}\int_{\Gamma}  \frac{\partial H_0^1(\mu|x-y|)}{\partial n(x)}\phi(y){\d}s(y),\quad x\in \Gamma, \end{eqnarray*}
	where $H_0^1(x)$ is the Hankel function of the first kind, order zero.


	\subsection{{Layer potential formulation with $\mu$ non-exceptional}} Let us first consider the case when  $\mu^2\not \in \text{spec}_D(\Omega)$. We seek solutions of {\sc Problem IA} in~\eqref{eq:steklov-helmholtz} using an indirect approach via a {\it single layer ansatz,} similar to the approach in \cite{Akhmetgaliyev}. That is, we assume eigenfunctions $u$ of \eqref{eq:steklov-helmholtz} satisfy
	\begin{equation}\label{eq:ansatz} u(x) = \frac{\iota}{4}\int_{\Gamma}  H_0^1(\mu|x-y|)\phi(y) {\d}s(y) :=\widetilde{S}_{\mu} \phi \quad \quad x\in \Omega, \end{equation}  for some density $\phi \in H^{-1/2}(\Gamma)$. Here, $ \widetilde{S}_{\mu}: H^{-1/2}(\Gamma)\rightarrow H^1(\Omega)$ is the {\it single layer potential,} and is related to $\mathcal{S}_\mu$ through the trace operator, $\mathcal{S}_\mu = Tr\circ \widetilde{S}_{\mu}$. By construction, $u = \widetilde{S}_\mu \phi$ will satisfy the Helmholtz equation $ -\Delta u - \mu^2 u=0$ in $\Omega$.
	On the boundary, we can use the well-known jump relations and \eqref{eq:steklov-helmholtz} to obtain the eigenvalue problem for the density $\phi$:
	
	{\sc Problem II:} {\it Suppose $\mu^2 \not\in \text{spec}_D(\Omega).$ Find $\sigma \in \mathbb{R}$ and nonzero $\phi \in H^{-1/2}(\Gamma)$ so that} 
	\begin{align}  \label{eq:steklovbie} 
		(\mathcal{K'}_\mu + \tfrac{1}{2}) \phi(x) = \sigma \mathcal{S}_\mu\phi(x),\qquad \forall x \in \Gamma. \end{align}
	
	If $\mu$ is not an exceptional value then {\sc Problem Ia} \eqref{eq:steklov-helmholtz} and {\sc Problem II} \eqref{eq:steklovbie} are isospectral in the following sense:
	$$ (\mathcal{K'}_\mu + \tfrac{1}{2}) \phi(x) = \sigma \mathcal{S}_\mu\phi(x) \quad \Leftrightarrow \quad  -\Delta u - \mu^2 u =0,  \quad 	\frac{\partial u}{\partial n} =\sigma \T u \quad \text{on}\,\, \Gamma.$$
	Provided $\mu^2_D \not\in \text{spec}_D(\Omega),$  a computational approach is to discretize \eqref{eq:steklovbie} and locate the eigenpairs $(\phi, \sigma)$. This approach allows us to achieve  high-accuracy approximations of the Steklov-Helmholtz eigenvalues for a wide range of domains.  The desired eigenfunctions $u$ of \eqref{eq:steklov-helmholtz} can then be easily reconstructed from the ansatz \eqref{eq:ansatz} using a quadrature. 
	
	We observe that for wavenumbers $\mu$ {\it close} to exceptional ones, the conditioning of numerical discretization approaches for {\sc{Problem Ia}} and {\sc{Problem II}} worsens. Discretizations of $\mathcal{S}_\mu$ become nearly singular.  One of the main challenges we focus on, therefore, is removing the restriction $\mu^2_D \not\in \text{spec}_D(\Omega);$ this in turn allows us to provide a wavenumber-robust discretization. 
	
	\subsection{{The Dirichlet-to-Neumann map formulation with $\mu \in \mathbb{R}$}}
	As mentioned in the introduction, the Steklov-Helmholtz eigenvalue {\sc{Problem Ia}} in  \eqref{eq:steklov-helmholtz} must be recast for exceptional wavenumbers $\mu_D$. We seek a problem which is isospectral to {\sc{Problem Ia}} and {\sc{Problem II}} when $\mu$ is not an exceptional value, and remains well-defined even when $\mu^2 \in \text{spec}_D(\Omega)$. We note that another interpretation of the Steklov-Helmholtz problem is through the associated eigenvalue problem for the {\it Dirichlet-to-Neumann map.} In this subsection we review the key ideas, following very closely the treatment \cite{Levitin23}. 
	
	Suppose $\mu$ is NOT exceptional. Define $\mathcal{E}_\mu$ as the Helmholtz extension: given $f\in H^{1/2}(\Gamma)$, the map $
	\mathcal{E}_\mu:H^{1/2}(\Gamma) \rightarrow H^1(\Omega)$ is defined by $
	\mathcal{E}_\mu f:=W$ where $W$ is the weak solution of  
	\begin{align*}
		- \Delta W - \mu^2 W =0 \,\, \text{in} \, \Omega, \qquad  W = f \,\, \text{on} \,\, \Gamma.
	\end{align*}
	This extension map is well-defined on $H^{1/2}(\Gamma)$.  The Dirichlet-to-Neumann map associated with the Helmholtz operator $ \mathcal{\widetilde{D}}_\mu: H^{1/2}(\Gamma) \rightarrow  H^{-1/2}(\Gamma)$ can then be  defined by:
	\begin{align*}
		\mathcal{\widetilde{D}}_\mu(u) =   \left( \frac{\partial}{\partial n} (\mathcal{E_\mu}u) \right)_{|\Gamma}.
	\end{align*}
	As is standard, we interpret $(\partial_n U)\vert_\Gamma$ as
	\begin{align} \label{def:normal-bdry}
		\int_\Gamma (\partial_n U)\vert_\Gamma \, \T (v) \, {\d}s = \int_{\Omega} \nabla U \cdot \nabla v \, {\d}x -\mu^2 \int_{\Omega}  v  U \, {\d}x,
	\end{align}
	for any $v \in H^1(\Omega)$ and any $U\in H^{3/2}(\Omega)$ which  satisfies the Helmholtz equation. For smooth $\Gamma$,  the map $\mathcal{\widetilde{D}}_\mu$ is an elliptic, self-adjoint, and order 1 pseudo-differential operator. 
	
	If $\mu^2 \not \in \text{spec}_D(\Omega)$, then the spectrum of $\mathcal{\widetilde{D}}_\mu$ is the same as the Steklov-Helmholtz spectrum of Problem I, though the eigenfunctions are different.
	Therefore, 
	$$ \mathcal{\widetilde D}_\mu f= \sigma f \Leftrightarrow - \Delta u -\mu^2 u =0 \quad \mbox{in} \,\, \Omega, \quad \partial_n u =\sigma\, \T(u) \,\, \mbox{on} \,\, \Gamma.$$
	In this case, the single layer operator $\mathcal{S}_\mu$ is invertible, and we can write $\mathcal{\widetilde{D}}_\mu = (\mathcal{K'}_\mu + \frac{1}{2}) {\mathcal{S}}_\mu^{-1}.$  
	However, if $\mu$ is an exceptional value, then $\mathcal{\widetilde{D}}_\mu$ is not well-defined on $H^{1/2}(\Gamma)$, and one must restrict the domain by avoiding the densities associated with Dirichlet eigenfunctions of the Laplacian. In this case, the extension $\mathcal{E}_\mu$ may be multi-valued. 
	
	We consider the set of Cauchy data of the solutions of the problem $$ - \Delta f = \mu^2 f, \quad \T(f) = \phi.$$ We note that for all $\mu \in \mathbb{R}$, the sets
	\begin{align*}
		\mathcal{F}(\mu):=\left\{\left(\T(w), \frac{\partial}{\partial n}w \right) \in H^{1/2}(\Gamma)\times H^{-1/2}(\Gamma) \vert w \in H^1(\Omega), -\Delta w = \mu^2 w\right\}\\
			\mathcal{G}(\mu):=\left\{\left( \frac{\partial}{\partial n}w, \T(w) \right) \in H^{-1/2}(\Gamma)\times H^{1/2}(\Gamma) \vert w \in H^1(\Omega), -\Delta w = \mu^2 w\right\}\\	
	\end{align*}
	can be viewed as  graphs of (potentially multi-valued) operators. In the case when $\mu$ is not an exceptional value, $\mathcal{F}(\mu)$ is the graph of the Dirichlet-to-Neumann map $\mathcal{\widetilde{D}}_\mu$. Working with the {\it linear relations} $\mathcal{F}(\mu)$ and $\mathcal{G}(\mu)$ allows us to extend the definition of the Dirichlet-to-Neumann map for all $\mu \in \mathbb{R}$. This is the approach taken in \cite{berhndt,Levitin23, arendt} and allows for the definition of the domain of the linear relation $\mathcal{F}$ as
	$$ \text{dom }{\mathcal F}(\mu):= \{ g \in H^{1/2}(\Gamma)\vert \left(g,h\right) \in \mathcal{F}(\mu) \, \text{for some }\, h\in H^{-1/2}(\Gamma)\};$$
	one can also characterize the set
		$$ \text{mul}\,{\mathcal F}(\mu):= \{ h \in H^{-1/2}(\Gamma)\vert \left(0,h\right) \in \mathcal{F}(\mu)\}.$$
	The linear relation $\mathcal{F}(\mu)$ is the graph of an operator iff $\text{mul}{\mathcal F}=\{0\}$.
	By examining the restrictions $F(\mu) := \mathcal{F}(\mu) \cap (L^2(\Gamma)\times L^2(\Gamma))$ again from the viewpoint of linear relations, and exploiting elliptic regularity results, it was shown in \cite{berhndt} that $F(\mu)$ is a self-adjoint relation in $L^2(\Gamma)$ with finitely many negative eigenvalues, and the operator part is an unbounded operator with discrete spectrum. This is true for {\it all} $\mu \in \mathbb{R}$, Theorem 5.10 \cite{berhndt}. We use these ideas and follow the treatment in Chapter 7.4 in \cite{Levitin23} to guide our algorithm development. Suppose $\mu=\mu_D$ is an exceptional value for $\Omega$, i.e., $\mu_D^2\in \text{spec}_D(\Omega)$.  We define the space
	\begin{align}
		\mathcal{H}_{\mu_D}:=\left\{ \frac{\partial u_D}{\partial n} \Big{\vert} \, \, -\Delta u_D = \mu_D^2 u_D \,\, \text{in}\,\, \Omega,  \,\, u_D=0\,\, \text{on} \,\, \Gamma \right\}.
	\end{align} That is, $\mathcal{H}_{\mu_D} (\equiv \text{mul} \, \mathcal{F}(\mu_D))$ is the set of Neumann traces of the Laplace-Dirichlet eigenfunctions $u_D$ corresponding to the interior Dirichlet eigenvalue $\mu_D^2$. The space is finite-dimensional since Dirichlet eigenvalues have finite multiplicity. 
	
	
	The Dirichlet-to-Neumann operator can now be defined for {\it all} real wavenumbers $\mu$, as a map $ \mathcal{D}_\mu:\text{dom}(\mathcal{D}_\mu) \rightarrow  H^{-1/2}(\Gamma)$ with:
	\begin{align} \label{def:DtN-all-mu}
		\mathcal{D}_\mu(u) 
		= \begin{cases}   
			\left(\frac{\partial}{\partial n} (\mathcal{E_\mu}u)\right)_{\Gamma},\quad &\mu^2 \notin \text{spec}_D(\Omega),\\[4pt]
			\Pi_{\mathcal{H}_\mu^\perp}\left( \frac{\partial}{\partial n} (\mathcal{E_\mu}u) \right)_{\Gamma}, \quad &\mu^2 \in \text{spec}_D(\Omega).
		\end{cases}
	\end{align}
	Note that the projection $\Pi_{\mathcal{H}_\mu^\perp}$ removes the potential multivaluedness of $\mathcal{E}_\mu$ when $\mu$ is exceptional. Again from  \cite{Levitin23}, the domain of $\mathcal{D}_\mu$ is 
	\begin{align}
		\text{dom}(\mathcal{D}_\mu):=\begin{cases}
			H^{1/2}(\Gamma), & \mu^2  \not \in\text{spec}_D(\Omega), \\
			H^{1}(\Gamma)\cap \mathcal{H}_\mu^{\perp}, &\mu^2\in\text{spec}_D(\Omega), 
	\end{cases}\end{align}
	where the orthogonality is in $L^2(\Gamma)$. We also define the range as
	\begin{align}
		\text{range}(\mathcal{D}_\mu):=\begin{cases}
			H^{-1/2}(\Gamma), & \, \mu^2  \not \in\text{spec}_D(\Omega), \\
			L^2(\Gamma), &\, \mu^2\in\text{spec}_D(\Omega).
		\end{cases}
	\end{align}
	Note that by restricting to $\mathcal{H}_\mu^\perp$, $\mathcal{D}_\mu$ is a well-defined mapping on $\text{dom}(\mathcal{D}_\mu)$ for all $\mu \in \mathbb{R}$.

	With this definition, we can state the eigenvalue problem for the Dirichlet-to-Neumann map for the Helmholtz operator, with no restrictions on $\mu$.
	
	\noindent{\sc Problem III:} {\it Let $\mu\in \mathbb{R}$ be given. Find eigenfunctions $\psi \in \text{dom}(\mathcal{D}_\mu$) and eigenvalues $\sigma \in \mathbb{R}$ so that}
	\begin{align}
		\label{eq:dtncareful}
		\mathcal{D}_\mu \psi = \sigma \psi.
	\end{align}
	
	\begin{lemma}
		Let $\mathcal{D}_\mu$ be the Dirichlet-to-Neumann map defined in~\eqref{def:DtN-all-mu} for all $\mu \in \mathbb{R}$. Then  {\sc{Problem III}} is isospectral to \sc{Problem Ia}.  
	\end{lemma}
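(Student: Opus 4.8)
\emph{Proof proposal.} The plan is to treat the two branches of \eqref{def:DtN-all-mu} separately and, in each, to exhibit an explicit linear bijection between the eigenpairs of {\sc Problem III} and those of {\sc Problem Ia} that preserves $\sigma$; this gives equality of the spectra together with multiplicities.

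\textbf{Non-exceptional $\mu$.} Here $\mathcal{D}_\mu = \widetilde{\mathcal{D}}_\mu$ and the correspondence is a direct unwinding of the definitions, which I would carry out in both directions. From an eigenpair $(\psi,\sigma)$ of {\sc Problem III}, set $u:=\mathcal{E}_\mu\psi\in H^1(\Omega)$: then $u$ solves the Helmholtz equation, $\T u=\psi$, and $\partial_n u=\widetilde{\mathcal{D}}_\mu\psi=\sigma\,\T u$ in the weak sense \eqref{def:normal-bdry}, with $u\neq 0$ since $\psi\neq 0$, so $(u,\sigma)$ solves {\sc Problem Ia}(1). Conversely, from an eigenfunction $u\in H^1(\Omega)$ of {\sc Problem Ia}(1), uniqueness of the Helmholtz--Dirichlet problem (valid because $\mu$ is non-exceptional) yields $u=\mathcal{E}_\mu(\T u)$, hence $\widetilde{\mathcal{D}}_\mu(\T u)=\partial_n u=\sigma\,\T u$; and $\T u\neq 0$, since otherwise $u$ would be a nonzero Dirichlet eigenfunction at $\mu^2$.

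\textbf{Exceptional $\mu=\mu_D$.} Here $\mathcal{D}_{\mu_D}=\Pi_{\mathcal{H}_\mu^\perp}\big(\partial_n\mathcal{E}_\mu(\cdot)\big)$ on $H^1(\Gamma)\cap\mathcal{H}_\mu^\perp$, and this is where the real work sits. The one computation I would isolate at the outset is Green's second identity for a Helmholtz solution $w$ and a Dirichlet eigenfunction $u_D$ at $\mu_D^2$:
\[
\int_\Gamma (\T w)\,\partial_n u_D\,\d s \;=\; \int_\Omega\bigl(u_D\Delta w - w\Delta u_D\bigr)\,\d x \;=\; 0 .
\]
This identity does everything: by the Fredholm alternative it shows the Helmholtz extension of a boundary datum $\psi$ exists exactly when $\psi\in\mathcal{H}_{\mu_D}^\perp$, so the stated $\mathrm{dom}(\mathcal{D}_{\mu_D})$ is precisely the set of admissible traces; it shows two extensions of the same datum differ by a Dirichlet eigenfunction, whose Neumann trace lies in $\mathcal{H}_{\mu_D}$ and is killed by $\Pi_{\mathcal{H}_\mu^\perp}$, so $\mathcal{D}_{\mu_D}$ is single-valued; and it gives $\T u\in\mathcal{H}_{\mu_D}^\perp$ for any solution $u$ of {\sc Problem Ia}(2). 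Granting this: from $(\psi,\sigma)$ with $\mathcal{D}_{\mu_D}\psi=\sigma\psi$, pick any Helmholtz extension $W\in H^{3/2}(\Omega)$ of $\psi$, decompose $\partial_n W=\sigma\psi+\partial_n u_D$ in $L^2(\Gamma)=\mathcal{H}_{\mu_D}^\perp\oplus\mathcal{H}_{\mu_D}$ using $\Pi_{\mathcal{H}_\mu^\perp}(\partial_n W)=\sigma\psi$, and set $u:=W-u_D\in H^{3/2}(\Omega)$, which solves Helmholtz, has $\T u=\psi\neq 0$, and $\partial_n u=\sigma\,\T u$. Conversely, from $u$ solving {\sc Problem Ia}(2), put $\psi:=\T u\in H^1(\Gamma)$; the Green identity gives $\psi\in\mathcal{H}_{\mu_D}^\perp$, so $\psi\in\mathrm{dom}(\mathcal{D}_{\mu_D})$, and since $\partial_n u=\sigma\psi$ already lies in $\mathcal{H}_{\mu_D}^\perp$ we get $\mathcal{D}_{\mu_D}\psi=\Pi_{\mathcal{H}_\mu^\perp}(\partial_n u)=\sigma\psi$. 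Non-triviality transfers because $\T u=0$ forces $\partial_n u=\sigma\,\T u=0$, hence $u\equiv 0$ by unique continuation for $-\Delta-\mu_D^2$.

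\textbf{Main obstacle.} The non-exceptional half is bookkeeping; the content is in the exceptional case. The hard part will be making the normal trace and the $L^2$-projection $\Pi_{\mathcal{H}_\mu^\perp}$ rigorous on a Lipschitz domain and justifying the regularity $W\in H^{3/2}(\Omega)$ for $H^1(\Gamma)$ data, for which I would invoke the elliptic-regularity results of \cite{berhndt,Levitin23} already encoded in the stated domain and range of $\mathcal{D}_{\mu_D}$; and confirming that the Fredholm compatibility condition for the Helmholtz--Dirichlet problem is exactly membership in $\mathcal{H}_{\mu_D}^\perp$, so that the domain $H^1(\Gamma)\cap\mathcal{H}_\mu^\perp$ and the projection in \eqref{def:DtN-all-mu} are forced rather than chosen. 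Once these are in place, isospectrality with multiplicity follows from the linear isomorphism of $\sigma$-eigenspaces constructed above.
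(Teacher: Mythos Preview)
Your proposal is correct and follows essentially the same route as the paper's proof: both dispatch the non-exceptional case as a tautology and, in the exceptional case, use the Green/weak-form identity to show $\T u\in\mathcal{H}_{\mu_D}^\perp$, then pass between an eigenfunction of {\sc Problem Ia} and an eigenfunction of {\sc Problem III} by selecting the Helmholtz extension whose Neumann trace lies in $\mathcal{H}_{\mu_D}^\perp$. Your write-up is in fact more explicit than the paper's on two points---you construct that distinguished extension by subtracting off the Dirichlet-eigenfunction component of $\partial_n W$ rather than simply asserting its existence, and you address nontriviality via unique continuation---but these are refinements of the same argument, not a different one.
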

	\begin{proof}
		We only need show the equivalence for $\mu$ being exceptional. First, suppose $(\sigma, \psi)$ is an eigenpair for {\sc{Problem III}}. Among the multi-values of $\mathcal{E}_\mu \psi$, we choose the one such that $\left(\frac{\partial}{\partial n}\mathcal{E}_\mu \psi \right)_\Gamma \in \mathcal{H}_\mu^\perp$. Then
		\begin{align*} 
			\left(\frac{\partial}{\partial n} (\mathcal{E_\mu}\psi) \right)_{|\Gamma}
			= \Pi_{\mathcal{H}_\mu^\perp}\left(\frac{\partial}{\partial n} (\mathcal{E_\mu}\psi) \right)_{|\Gamma}
			= \sigma \psi.
			\end{align*}
		Therefore, $(\sigma, \mathcal{E}_\mu \psi)$ is an eigenpair of {\sc{Problem Ia}}. 
		
		Next, suppose that $(\sigma, u)$ is any eigenpair of {\sc{Problem Ia}}, such that $\T(u) \not=0$. (That is, we consider an eigenpair of {\sc{Problem Ia}} corresponding to a finite value of $\sigma$.) Suppose $w$ is a Dirichlet eigenfunction of the Laplacian (Dirichlet eigenvalue $\mu^2$). Then, since $u\in H^1(\Omega)$ is a Steklov-Helmholtz eigenfunction, and since $w \in H^1_0(\Omega)$ solves $ -\Delta w= \mu^2 w$, we get
		$$ \mu^2\int_\Omega uw\,{\d}\,\Omega = \int_{\Omega} \nabla u \cdot \nabla w \, {\d}\Omega, \quad  \mu^2\int_\Omega uw\,{\d}\,\Omega = \mu^2\int_{\Omega} u \,w \, {\d}\Omega + \int_{\Gamma} u \frac{\partial w}{\partial n}\, {\d}s. $$ From this it follows that $\int_{\Gamma} u \frac{\partial w}{\partial n}\, {\d}s =0$, that is, $\T(u) \in \mathcal{H}_\mu^{\perp}$. 
		From elliptic regularity, it can be seen that $w \in H^{3/2}(\Omega)$,  (eg.\cite{mitrea}, \cite{berhndt} Lemma 5.1 and Theorem 5.2). 
		
		Now let $\bar{u}=u+w \in H^{3/2}(\Omega)$. Since $\sigma \T(u) = \frac{\partial u}{\partial n}$, we have
	$$	\Pi_{\mathcal{H}_\mu^\perp}
		\left(\frac{\partial \bar u}{\partial n} \Big\vert_\Gamma \right)
		= \Pi_{\mathcal{H}_\mu^\perp}
		\left(\sigma \T(\bar u)\right)
		= \sigma \T(\bar u),$$ and hence $(\sigma, \T(\bar{u}))$ is an eigenpair of {\sc Problem III}.
	\end{proof}
	
	We record a variational characterization of the eigenvalues of the Dirichlet-to-Neumann map, noting this is well-defined for {\it all} wavenumbers $\mu$:
	\smallskip
	
	\noindent {\sc Problem III':}{\it Let the wavenumber $\mu\in \mathbb{R}$ be fixed. The $kth$ eigenvalue of $\mathcal{D}_\mu$ is given by} 
	$$ \sigma_k(\mu)= \min_{X \subseteq \text{dom}{(\mathcal{D}_\mu)}, \, dim(X)=k} \max_{f\in X, f\not=0} \frac{\|\nabla \mathcal{E}_\mu f|^2_{L^2(\Omega)} - \mu^2\|\mathcal{E}_\mu f\|^2_{L^2(\Omega)}}{\|f\|^2_{L^2(\Gamma)}}.$$
	Once again, in analogy to the variational characterization \eqref{eq:RRversion2}, the admissible functions avoid a (finite-dimensional) set related to Dirichlet eigenfunctions of the Laplacian.  
	
	
	\subsection{{Layer potential formulation with $\mu \in \mathbb{R}$}}
	We would like to define the Dirichlet-to-Neumann map $\mathcal{D}_\mu: \text{dom}(\mathcal{D}_\mu) \to \text{range}(\mathcal{D}_\mu)$ for the self-adjoint operator $(-\Delta - \mu^2)$ in terms of boundary integral operators, (see eg. \cite{mclean}). {Recall} that if $\mu \in \mathbb{R}$ is not an exceptional value, then  
	\begin{equation}\label{eq:dtn_bie}
		\mathcal{D}_\mu =(\tfrac{1}{2}I + \mathcal{K'}_\mu)\circ \mathcal{S}^{-1}_{\mu}.
	\end{equation}
	However, if $\mu=\mu_D$ is exceptional, then \eqref{eq:dtn_bie} must be modified. First, the kernel of the single layer operator $\text{ker}{\mathcal S}_{\mu_D}$ is a subset of $L^2(\Gamma)$ due to regularity arguments (see, eg. Theorem 5.2 in \cite{berhndt}). Moreover, $u_D \in H^{3/2}(\Omega)$ is a Dirichlet eigenfunction of the Laplacian iff $u_D = \mathcal{\widetilde{S}}_{\mu_D} \phi_D$ 
	for some $\phi_D\in  \text{ker}{\mathcal S}_\mu$. 
	
	The layer-potential characterization of the space $\mathcal{H}_{\mu_D}$ is as follows: suppose we represent the interior Dirichlet eigenfunctions of $\Omega$ corresponding to $\mu_D^2$ in terms of the single layer operator such that
	\begin{align}
		u_D(x)= \int_{\Gamma}H^1_0(\mu_D(x-y)) \psi(y)\, \d s_y = \widetilde{\mathcal{S}}_{\mu_D} \psi. 	
	\end{align}  
	Then the trace of the normal derivative of $u_D$ can be written in terms of the adjoint of the double layer operator:
	\begin{align}
		\frac{\partial u_D }{\partial n} = (\mathcal{K'}_{\mu_D}
		+\tfrac{1}{2}\mathcal{I})\psi. 
	\end{align}
	Therefore, we arrive at the layer-potential characterization of the space $\mathcal{H}_{\mu_D}$:
	\begin{align}
		\mathcal{H}_{\mu_D}:=\{w \in L^2(\Gamma)\vert  w = (\mathcal{K'}_{\mu_D}+\tfrac{1}{2}\mathcal{I})\psi, \,\, \, \text{where} \,\, \psi \in \text{ker}{\mathcal S}_\mu \}.
	\end{align}
	
	We are led to a problem in terms of boundary integral operators:
	\smallskip
	
	\noindent {\sc Problem IV:} {\it Let $\mu\in \mathbb{R}$ be given. Find a nonzero density $\phi \in L^2(\Gamma)$, and $\sigma \in \mathbb{R}$ which satisfy the constrained eigenvalue problem:}
	\begin{subequations}
		\begin{align}
			{\Pi_{\mathcal{H}_\mu}^\perp}(\mathcal{K'}_\mu + \frac{1}{2})\phi &= \sigma \mathcal{S}_\mu\phi,
			\\ 
			(\phi,\psi)&=0,\quad  \forall \psi \in \text{ker}\mathcal{S}_\mu,
		\end{align}
	\end{subequations}
	where we constrain the domain of densities to those orthogonal to the (finite dimensional) kernel of $\mathcal{S}_\mu$ for a unique $\phi$.
	
	\begin{lemma}
		{\sc Problem IV} is isospectral to {\sc Problem III} for the (suitably defined) Dirichlet-to-Neumann map $\mathcal{D}_\mu$. 
	\end{lemma}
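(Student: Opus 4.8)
The plan is to handle the two cases in the definition~\eqref{def:DtN-all-mu} of $\mathcal{D}_\mu$ separately, and in each case to exhibit a linear bijection between the eigenspace of {\sc Problem III} and the eigenspace of {\sc Problem IV} at a common $\sigma\in\mathbb{R}$; the bijection I would use is $\phi\mapsto\psi:=\mathcal{S}_\mu\phi$, with inverse given by solving $\mathcal{S}_\mu\phi=\psi$ for the unique $\phi$ orthogonal to $\ker\mathcal{S}_\mu$, so that isospectrality comes with equality of multiplicities. For non-exceptional $\mu$ this is immediate: here $\ker\mathcal{S}_\mu=\{0\}=\mathcal{H}_\mu$, the projector $\Pi_{\mathcal{H}_\mu^\perp}$ is the identity, and the constraint in {\sc Problem IV} is vacuous, so {\sc Problem IV} is exactly {\sc Problem II}; since $\mathcal{S}_\mu$ is invertible and $\mathcal{D}_\mu=(\tfrac12 I+\mathcal{K'}_\mu)\circ\mathcal{S}_\mu^{-1}$ by~\eqref{eq:dtn_bie}, the substitution $\psi=\mathcal{S}_\mu\phi$ carries $(\mathcal{K'}_\mu+\tfrac12)\phi=\sigma\mathcal{S}_\mu\phi$ to $\mathcal{D}_\mu\psi=\sigma\psi$ and back.

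For exceptional $\mu=\mu_D$ I would first record the identification $\ker\mathcal{S}_\mu=\mathcal{H}_\mu$: if $\phi_D\in\ker\mathcal{S}_\mu$ then $v:=\widetilde{S}_\mu\phi_D$ has vanishing Dirichlet trace on $\Gamma$, hence (by uniqueness of the radiating exterior Dirichlet problem, valid for all real $\mu$) $v\equiv0$ outside $\overline{\Omega}$, so its exterior normal derivative vanishes, and the jump relations then give $(\mathcal{K'}_\mu+\tfrac12)\phi_D=\phi_D$; consequently $\mathcal{H}_\mu=(\mathcal{K'}_\mu+\tfrac12)(\ker\mathcal{S}_\mu)=\ker\mathcal{S}_\mu$, a finite-dimensional subspace of $L^2(\Gamma)$ admitting a real basis (the Neumann traces $\partial_n u_D$). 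For {\sc Problem IV}$\,\Rightarrow\,${\sc Problem III}, given an eigenpair $(\sigma,\phi)$ with $\phi\perp\ker\mathcal{S}_\mu$, I would set $u:=\widetilde{S}_\mu\phi$ on $\Omega$ and $\psi:=\mathcal{S}_\mu\phi=\T(u)$. Then $\psi\neq0$ (otherwise $\phi\in\ker\mathcal{S}_\mu$ and the constraint forces $\phi=0$), $\psi\in H^1(\Gamma)$ by the mapping properties of $\mathcal{S}_\mu$, and $\psi\perp_{L^2}\mathcal{H}_\mu$ since Green's identity for $u$ and any Dirichlet eigenfunction $u_D$ at $\mu^2$ (both solve the Helmholtz equation and $\T(u_D)=0$) gives $\int_\Gamma\psi\,\partial_n u_D\,{\d}s=0$; hence $\psi\in\operatorname{dom}(\mathcal{D}_\mu)$. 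Because $u$ is one of the multivalued Helmholtz extensions of $\psi$, the admissible Neumann traces of $\mathcal{E}_\mu\psi$ form the coset $(\mathcal{K'}_\mu+\tfrac12)\phi+\mathcal{H}_\mu$, so $\mathcal{D}_\mu\psi=\Pi_{\mathcal{H}_\mu^\perp}(\mathcal{K'}_\mu+\tfrac12)\phi=\sigma\mathcal{S}_\mu\phi=\sigma\psi$ by the equation of {\sc Problem IV}.

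For the converse {\sc Problem III}$\,\Rightarrow\,${\sc Problem IV}, given an eigenpair $(\sigma,\psi)$ I would choose, among the multivalues of $\mathcal{E}_\mu\psi$, the one $W$ with $\partial_n W\in\mathcal{H}_\mu^\perp$, so that $\partial_n W=\mathcal{D}_\mu\psi=\sigma\psi$. The crucial analytic step is that $\psi$ lies in $\operatorname{range}(\mathcal{S}_\mu|_{L^2(\Gamma)})$: since $\mathcal{S}_\mu$ is elliptic of order $-1$ it is Fredholm of index $0$ from $L^2(\Gamma)$ onto $\{f\in H^1(\Gamma):f\perp_{L^2}\mathcal{H}_\mu\}$, where one uses the symmetry of the single-layer kernel under the bilinear pairing together with the fact that $\ker\mathcal{S}_\mu=\mathcal{H}_\mu$ has a real basis to pass from the bilinear cokernel to the $L^2$-orthogonal complement; as $\psi\in\operatorname{dom}(\mathcal{D}_\mu)=H^1(\Gamma)\cap\mathcal{H}_\mu^\perp$, this range contains $\psi$. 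I then take the unique $\phi\perp\ker\mathcal{S}_\mu$ with $\mathcal{S}_\mu\phi=\psi$ (so $\phi\neq0$); now $\widetilde{S}_\mu\phi-W$ solves the Helmholtz equation in $\Omega$ with zero trace, hence is a Dirichlet eigenfunction $u_D$, and therefore $(\mathcal{K'}_\mu+\tfrac12)\phi=\partial_n(\widetilde{S}_\mu\phi)=\sigma\psi+\partial_n u_D$; projecting onto $\mathcal{H}_\mu^\perp$ deletes $\partial_n u_D\in\mathcal{H}_\mu$ and yields $\Pi_{\mathcal{H}_\mu^\perp}(\mathcal{K'}_\mu+\tfrac12)\phi=\sigma\psi=\sigma\mathcal{S}_\mu\phi$, so $(\sigma,\phi)$ solves {\sc Problem IV}. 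The two assignments are mutually inverse on the respective eigenspaces, so the spectra coincide with multiplicity.

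The routine ingredients are the single-layer jump relations, Green's identities, and the handling of the multivaluedness of $\mathcal{E}_\mu$ (already illustrated in the proof of the preceding lemma). The step I expect to be the main obstacle is the surjectivity assertion in the converse direction — identifying $\operatorname{range}(\mathcal{S}_\mu|_{L^2})$ exactly with $H^1(\Gamma)\cap\mathcal{H}_\mu^\perp$. This rests on the Fredholm and elliptic-regularity properties of the Helmholtz single-layer operator at an exceptional wavenumber (for which the excerpt already invokes Theorem~5.2 of \cite{berhndt}), together with a careful matching of the $L^2$-orthogonality in $\operatorname{dom}(\mathcal{D}_\mu)$ with the cokernel of $\mathcal{S}_\mu$; for domains that are only Lipschitz these mapping properties are the delicate point.
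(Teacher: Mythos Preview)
Your proof is correct and follows essentially the same route as the paper's: in both cases the correspondence is $\phi\mapsto\psi=\mathcal{S}_\mu\phi$, and in the exceptional case one uses that the multivalued extensions of $\psi$ are $\widetilde{S}_\mu\phi+u_D$ so that the projector $\Pi_{\mathcal{H}_\mu^\perp}$ recovers the Problem~IV equation. Your treatment is in fact more complete than the paper's, which in the converse direction simply asserts ``let $u=\mathcal{S}_\mu\phi$ with $\phi\in(\ker\mathcal{S}_\mu)^\perp$'' without justifying that such a $\phi$ exists; your Fredholm argument for $\operatorname{range}(\mathcal{S}_\mu|_{L^2})=H^1(\Gamma)\cap\mathcal{H}_\mu^\perp$ (together with the identification $\ker\mathcal{S}_\mu=\mathcal{H}_\mu$ and the real-basis observation needed to pass from the bilinear to the sesquilinear cokernel) supplies exactly the step the paper omits, and your bijection remark upgrades ``same spectrum'' to ``same multiplicities.''
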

	
	\begin{proof}
		In case $\mu^2 \not \in \text{spec}_D(\Omega)$ the claim is clear.    
		Now suppose $\mu^2$ is a Dirichlet eigenvalue of the Laplacian, and $(\sigma, \phi)$ solves {\sc Problem IV}. The function $f:=\mathcal{S}_\mu \phi \in H^1(\Gamma)$ since $\phi \in L^2(\Gamma)$, and by construction, any Helmholtz extension of the boundary trace $f$ is of the form
		\begin{align*}
			\mathcal{E}_\mu f = \widetilde{S}_\mu \phi +w_D.
		\end{align*}
		By the definition of the Dirichlet-to-Neumann map $\mathcal{D}_\mu$,
		\begin{align*}
			\mathcal{D}_\mu f ={\Pi_{\mathcal{H}_\mu}^\perp} \left(\frac{\partial}{\partial n} \widetilde{S}_\mu \phi \right)
			={\Pi_{\mathcal{H}_\mu}^\perp} \left(\frac{1}{2}+\mathcal{K}'_\mu \right)\phi = \sigma \mathcal{S}_\mu \phi = \sigma f,
		\end{align*}
		showing that $(\sigma,f)$ solves {\sc Problem III}. If $(\sigma, u)$ solves {\sc Problem III}, then {\sc Problem IV} holds by letting $u = \mathcal{S}_\mu \phi$ and $\phi \in (ker\mathcal{S}_\mu)^\perp$.
	\end{proof}
	
	We note that the single layer operator $\mathcal{S}_\mu$ is invertible on bounded, smooth simply connected domains for $\mu\in \mathbb{R}$ except for a countable number of points (when $\mu^2$ is an interior Dirichlet eigenvalue of $-\Delta$). Additionally, $\mathcal{S}_0$ may fail to be invertible if the logarithmic capacity of $\Omega=1$. For the rest of this paper, if $\mu=0$ we replace $\mathcal{S}_0$ by the modified single layer operator $\mathcal{S}_0\leftarrow\tilde{\mathcal{S}}_0$ (see, eg. \cite{ColtonKress13}). With this change, $\mathcal{S}_{\mu}$ fails to be invertible only if $\mu = \mu_D$.

\section{A discretization approach for the Steklov-Helmholtz problem}
We now detail the procedure for computing the spectrum of {\sc Problem II} and {\sc Problem IV}. We propose a collocation-based approach, as opposed to a Galerkin approach via finite element or boundary element methods. We then describe a wavenumber-robust strategy to solve the resulting generalized eigenvalue problem.

\subsection{Quadrature} \label{Section:quadrature}

In the {\sc Problems} II and IV, the integral operators $\mathcal{K'}_\mu$ and $\mathcal{S}_\mu$ must be discretized. For smooth boundaries this is done via standard quadrature schemes first proposed by \cite{Kussmaul69} and \cite{Martensen63}. In the case of piecewise smooth domains, other sophisticated quadrature schemes are available, including \cite{Helsing08, Bruno01}, and others which include grading towards the corners, \cite{ColtonKress13, Klockner13}. This list is not exhaustive. We now recall some standard quadrature approaches used in the numerical results sections.

In case $\Gamma$ is smooth, we can parametrize it as
$\Gamma:=\{(x(t),y(t)),~ t\in [0,2\pi ] \}$, and the integrals in {\sc Problem II} \eqref{eq:steklovbie} can be replaced by integrals over $[0,2\pi]$. Noting the kernels in the single and double layer operators are  singular, we employ the technique of \cite{Kussmaul69} and \cite{Martensen63} of adding and subtracting  $\ln\left(4\sin^2\frac{t-\tau}{2}\right)$ from the kernels:

\begin{align*}
	\mathcal{S}_{\mu} \phi(x(t))\equiv 	\mathcal{S}_{\mu} \psi(t) &=  \int_0^{2\pi} \left[M_1(t,\tau)\ln\left(4\sin^2\frac{t-\tau}{2}\right)+M_2(t,\tau)\right]\psi(\tau)\d \tau,\\
	\mathcal{K'}_{\mu} \phi(x(t))\equiv 	\mathcal{K'}_{\mu} \psi(t)=&\int_{0}^{2\pi} \left[L_1(t,\tau)\ln\left(4\sin^2\frac{t-\tau}{2}\right)+L_2(t,\tau)\right] \psi(\tau)\d \tau.
\end{align*}

Now we have integrals with logarithmic kernels of the form,
\[
\int_0^{2\pi}f(t,\tau)\ln\left(4\sin^2\frac{t-\tau}{2}\right)+g(t,\tau)\d \tau,
\]
where $f$ and $g$ are smooth and $2\pi$ periodic. The smooth integrals are evaluated using the Trapezoidal rule.  
The approximation of the logarithmically-singular integral is given  by \cite{ColtonKress13}
\begin{eqnarray*}
	&\int_0^{2\pi}f(t,\tau)\ln\left(4\sin^2\frac{t-\tau}{2}\right) \d \tau \approx \sum_{j=0}^{2N-1}R^N(t,\tau_j)f(t,\tau_j), \text{ where}\\
	&R^N(t,\tau_j) := -\frac{2\pi}{N}\sum_{m=1}^{N-1}\frac{\cos m(t-\tau_j)}{m}-\pi\frac{\cos N(t-\tau_j)}{N^2}, \quad \tau_j = \frac{2\pi j}{N}.
	\\
\end{eqnarray*}

In the case of domains $\Omega$ with {$M$} Lipschitz boundaries $\Gamma = \cup_{L=1}^M \Gamma_L$ where each boundary part $\Gamma_L$ is smooth, we can introduce polynomially-graded meshes on each $\Gamma_L$. Suppose that
$$ \int_{\Gamma_L} f \, ds = \int_a^b f(t)\, dt$$ and that $f$ has singularities at the endpoints.
On the interval $t\in (a,b]$, the  polynomial change of variables $t=w(s)$ is designed in a way that the gradient $w'(s)$ at the corners vanishes up to order $p \in \mathbb{N}$. In particular, from \cite{ColtonKress13},
\begin{equation}\label{eqn:cov}
	\begin{split}
		&w(s) = a+(b-a) \frac{[v(s)]^p}{[v(s)]^p+[v(2\pi-s)]^p}, \quad 0\leq s\leq 2\pi, \ \\
		&v(s) = \left(\frac{1}{p}-\frac{1}{2}\right)\left(\frac{\pi-s}{\pi}\right)^3+\frac{s-\pi}{p\pi}+\frac{1}{2},~p\geq 2.
	\end{split}
\end{equation}

This form of polynomial grading \cite{ColtonKress13} (see Section 3.5) and \cite{Akhmetgaliyev} allows for about half the points on the segment to be accumulated towards the end points.  In practice we observe the conditioning of the matrices deteriorates for $p>6$.  We remark that a generalization of this quadrature due to \cite{jeon93}, based on rational changes of variable, was also tested as part of this eigenvalue discretization strategy; however, the method did not appear robust to choices of grading parameters in this setting. 

Graded meshes can also be used for problems with mixed Steklov and Neumann data on polygonal domains. On smooth domains with such junctions, one can achieve even higher accuracy by incorporating detailed information about the asymptotic behaviour of the eigendensities, similarly to the approach in \cite{Akhmetgaliyev}.

 Other quadrature approaches could also be implemented. We show, for instance, the performance of ChunkIE \cite{Askham_chunkIE_a_MATLAB_2024}, a 2D integral equation package which uses the RCIP quadature approach \cite{helsing}. The advantage of this approach is that the discrete single and double-layer matrices are better conditioned than those from a standard graded-mesh approach.

Using the chosen quadrature rules, we are led to the following generalized discrete eigenvalue problem for {\sc Problem II:}

{\sc Problem IID:} {\it Suppose $\mathtt{S}_\mu$ is nonsingular, ie, $\mu$ is not an exceptional wavenumber. Find nonzero ${\tt p} \in {\mathbb{C}}^{N}$ and $\sigma \in \mathbb{R}$ so that}
\begin{eqnarray}\label{eq:2D}
		\mathtt{(K_\mu +\tfrac{1}{2}I)p} = \sigma \mathtt{S_\mu p}.
\end{eqnarray} 
Variants of {\sc Problem IID} are considered in other related works, including \cite{Ma22}.

Similarly, the discrete version of {\sc Problem IV} is given as:

{\sc Problem IVD:} {\it Let  $\mu$ be any real wavenumber. Find nonzero $\mathtt{p}\in {\mathbb{C}}^{N}$ and $\sigma \in \mathbb{R}$ so that}
\begin{eqnarray}\label{eq:4D}
	\mathtt{(K_\mu +\tfrac{1}{2}I)}\mathtt{p} &= &\sigma \mathtt{S}_\mu \mathtt{p},\qquad 
	\mathtt{p}^T  \mathtt{q} =0, \quad \forall \mathtt{q}\in \mathtt{ker(S_\mu)}.
\end{eqnarray} 	
	
 We note these quadrature schemes will yield high-accuracy approximations for the operators themselves; if we wish to compute the volumetric eigenfunctions $u$ of {\sc Problem I}, we can postprocess these using \eqref{eq:ansatz}. While accuracy may degenerate as we evaluate closer to the boundary $\Gamma$, these integrals can also be achieved with high accuracy (for instance, \cite{barnett14,ludvig}).

In the rest of the paper, we restrict ourselves to either Nystr\"om or graded-mesh quadratures, and focus instead on aspects of eigenvalue approximation.
\subsection{The BIO-MOD approach}
Upon discretization, we obtain generalized eigenvalue problems \eqref{eq:2D} and \eqref{eq:4D}. The matrices $\mathtt{(K_\mu +\frac{1}{2}I)}$ and $\mathtt{S}_\mu$ are not necessarily Hermitian. 
We  observe (trivially) that $\mathtt{S_\mu}$ has a non-empty kernel if $\mu^{{2}}$ is an interior Dirichlet eigenvalue (ie, $\mu$ is an exceptional wavenumber); $\mathtt{(K_\mu +\frac{1}{2}I)}$ will have a non-empty kernel if $\mu^2$ is an interior Neumann eigenvalue.

If $\mu_D^2\not \in \text{spec}_D(\Omega)$, the matrix { $\mathtt S_\mu$} is non-singular. The discrete eigenvalues $\{\sigma_{i,N}\}_{i=1}^{N}$ of {\sc Problem IID} can be obtained using, for instance, {\tt eig} in {\tt MatLab}. We term this approach the {\it BIO approach}. This approach leads to high-accuracy eigenvalue approximations if $\mu$ is complex or far from an exceptional wavenumber. 
However, if $\mu_D^2 \in \text{spec}_D(\Omega)$, then the performance of both Arnoldi and the QZ algorithms deteriorate. The need to develop  strategies for computing the spectrum of the Dirichlet-to-Neumann map which work efficiently, accurately and stably for all real wavenumbers was originally motivated by applications in shape optimization, described later. 

  If $\mu^2_D\in \text{spec}_D(\Omega)$ is a Dirichlet eigenvalue of multiplicity $\ell$, then $\sigma_{i}=-\infty,i=1,\cdots,\ell$. The $\ell$ smallest singular values of  $\mathtt S_\mu$ are zero, and the BIO approach based on solving generalized eigenvalue problem in \autoref{eq:2D} leads to poor accuracy.  
  
 Motivated by the formulation of {\sc Problem IV} and the discrete variant {\sc Problem IVD}, we propose instead a wavenumber robust modification of the BIO approach, termed {\it BIO-MOD}. 
 
 Write the reduced SVD of the discrete single layer \begin{equation}
\mathtt{S_\mu=\hat{U}_\mu\hat{\Sigma}(\mu)\hat{V}_\mu^*},
\end{equation} where $\mathtt{\hat\Sigma}(\mu)$ is a square real diagonal matrix of size $r = N-\ell$ with non-zero singular values. By construction, the columns of ${\tt \hat{V}_\mu}$ provide an orthogonal basis of $\mathtt{(null(S_\mu))^\perp}.$ We can write the discrete solution {\tt p} of {\sc Problem IVD} as a linear combination of the $N-\ell$ columns of $V$, 
$$ \mathtt p = \mathtt{V_\mu P}, \quad \mathtt{P} \in {\mathbb{C}}^{N-\ell}.$$ 
An direct computation shows
\begin{equation}
\mathtt{(K_\mu +\tfrac{1}{2}I)}\mathtt{p} = \sigma \mathtt{S}_\mu \mathtt{p}, \, \mathtt{p}^T  \mathtt{q} =0, \, \forall \mathtt{q}\in \mathtt{ker(S_\mu)} \quad  \Leftrightarrow \quad \mathtt{\hat{U}_\mu^*(K_\mu +\tfrac{1}{2}I)} \mathtt{\hat{V}_\mu}\mathtt{P} = \sigma \hat{\Sigma}_\mu\mathtt{P}.
\end{equation} 
 The $(N-\ell) \times (N-\ell) $ generalized eigenvalue problem for $\mathtt{P}$ is well-conditioned, since $\hat{\Sigma}_\mu$ is invertible.
However, the enumeration of Steklov-Helmholtz eigenvalues for the original problem requires that we append $\sigma_1=\sigma_2=...=\sigma_{\ell} = -\infty$. These correspond to the (infinite magnitude) eigenvalues associated with the Steklov-Helmholtz problem at an exceptional wavenumber $\mu_D$ of multiplicity $\ell$.

In practice, we use the {\it truncated} SVD instead, dropping singular values smaller than a specified tolerance TOL (and corresponding singular vectors). To improve efficiency, we first compute the SVD for the eigenvalue problem on a coarse grid with $N_1=N/3$ for $N>300$; this allows us to identify rank-deficiency. Of course, it is possible that we {\it miss} rank-deficiency of the single layer matrix if the grid with $N_1$ points fails to resolve the corresponding eigendensity. 

\begin{algorithm}
	\caption{BIO-MOD: a wavenumber-robust method for Steklov-Helmholtz}\label{alg:SHspec}
	\begin{algorithmic}
		\Require Curve parametrization, wavenumber $\mu$, $N$, TOL.
		\If{ $N>300$} \State $N_1=N/3$ \Else \State $N_1=N$ \EndIf
		\State Build discrete single layer matrix $\mathtt{S_\mu}$ using $N_1$ discretization points
		\State Compute SVD $\mathtt{S_\mu = U_\mu\Sigma V_\mu, \quad \Sigma = diag(s_1,\cdots,s_{N_1})}$
		\State Compute $\ell = \#\{\lfloor \log_{10}(\mathtt{s_i})\rfloor \leq \log_{10}(TOL)\}$.
		\If{$\ell\neq 0$ } 
	\State Build matrices $\mathtt{S_\mu}, (\mathtt{K_\mu +\frac{1}{2}I})$ using $N$ discretization points
	\State Compute SVD $\mathtt{[U_\mu,\Sigma_\mu,V_\mu ]= S_\mu}$  
		\State Compute truncated SVD $\mathtt{S_\mu  = \hat{U}_\mu \hat{\Sigma}_\mu \hat{V}_\mu}$ by discarding singular values $<TOL$
		\State Solve generalized EVP $\mathtt{\hat{U}_\mu^*(K_\mu +\frac{1}{2}I)} \mathtt{\hat{V}_\mu}\mathtt{P} = \sigma \mathtt{\hat{\Sigma}_\mu}\mathtt{P}$
		\State Return eigenpairs $\sigma, {\tt p}=\mathtt{\hat{V}_\mu P}$. Append $\sigma_1=\sigma_2...\sigma_\ell = -\texttt{Inf}$
	\Else
	\State Build matrix $(\mathtt{K_\mu +\frac{1}{2}I})$ using $N$ discretization points
     \If{$N_1<N$}	\State  Build matrices $\mathtt{S_\mu}$ \EndIf
	\State Solve generalized EVP $\mathtt{(K_\mu +\frac{1}{2}I)}\mathtt{p} = \sigma \mathtt{S}_\mu \mathtt{p}$
	\State Return eigenpairs $\sigma, {\tt p}$
		\EndIf
	
\end{algorithmic}
\end{algorithm}

We note that for wavenumbers which are {\it not} exceptional, the BIO and BIO-mod algorithms reduce to the same method. 

We close this section by noting that Algorithm BIO-MOD will work for Steklov-Helmholtz problems in 3D as well; the boundary integral formulation and quadratures involved must, of course, be changed suitably.

\subsection{Numerical convergence tests}
In this section we demonstrate numerically the convergence properties of the proposed BIO-MOD method. The rates of convergence will depend on the accuracy of the quadrature methods. There are two important considerations while assessing performance: 
\begin{itemize}
	\item How well are the first $k$ eigenvalues approximated, as the number of discretization points $N$ is increased? For Lipschitz domains, what role does the degree of polynomial grading play?
	In the experiments below, we report the (mean) relative error of the first Q Steklov-Helmholtz eigenvalues computed using $N$ discretization points,defined as
	\begin{equation}\label{eq:MRE}
	\text{MRE}_{Q}(N):=\frac{1}{Q} \sum_{j=1}^{Q} \frac{|\sigma_{i,N}(\mu) - \sigma_i(\mu)|}{{|}\sigma_i(\mu){|}}
	\end{equation} (If $\sigma_i(\mu)=0$, we use the absolute error instead.)
	\item As $N$ increases, we are able to compute an increasing number of approximate eigenvalues. The higher discrete eigenvalues may not be good approximations to the corresponding true ones.  How many eigenvalues are well-approximated for a given choice of $N$? 
\end{itemize}

Away from exceptional wavenumbers $\mu$, the boundary integral approach yields high-accuracy approximations of the Steklov eigenpairs without modification. For domains with smooth boundary the convergence is spectral.  In \autoref{fig:diskconvergencebasic} we show $\text{MRE}_{16}(N)$ on the unit disk as well as a (non-convex) kite-shaped domain. 
For the unit disk the true (unsorted) Steklov-Helmholtz eigenvalues are given by 
$\mu\frac{J_n'(\mu)}{J_n(\mu)}$. {For} the kite-shaped domain whose boundary has parametrization \begin{equation}\label{eq:kite}
 \text{Kite}: \, \Gamma :=\{(x,y)\vert  x=\cos(t)+\kappa\cos(2t)-\kappa, y=1.5\sin(t), \, t\in [0,2\pi) \}, 
\end{equation} 
with $\kappa=0.65$, the computed eigenvalues are compared with a highly-resolved computation ($N=2048$).

Regularity of the density plays a key role in recovering eigenvalues to desired precision. For smooth densities, very few boundary points are required. From \autoref{fig:diskconvergencebasic}, \autoref{tab:100sig_p1}, and \autoref{tab:100sig_30} we also see that high accuracy can be achieved with a few boundary points, even for large wavenumbers, or higher into the Steklov-Helmholtz spectrum, as soon as the density is resolved. In \autoref{tab:100sig_p1} we present computed approximations for $\sigma_{100}(0.1)$ on the unit disk, and in  \autoref{tab:100sig_30} we present approximations for $\sigma_{100}(30)$. We see that even with $\mu=30$, 200 boundary points suffice to resolve the large eigenvalues correct to 14 digits. 
 This accuracy is not achievable with similar numbers of unknowns if we use discretization strategies which rely on resolving the eigenfunctions in the volume $\Omega$. 

\begin{figure}[H]
	\centering
	\includegraphics[width=0.495\linewidth]{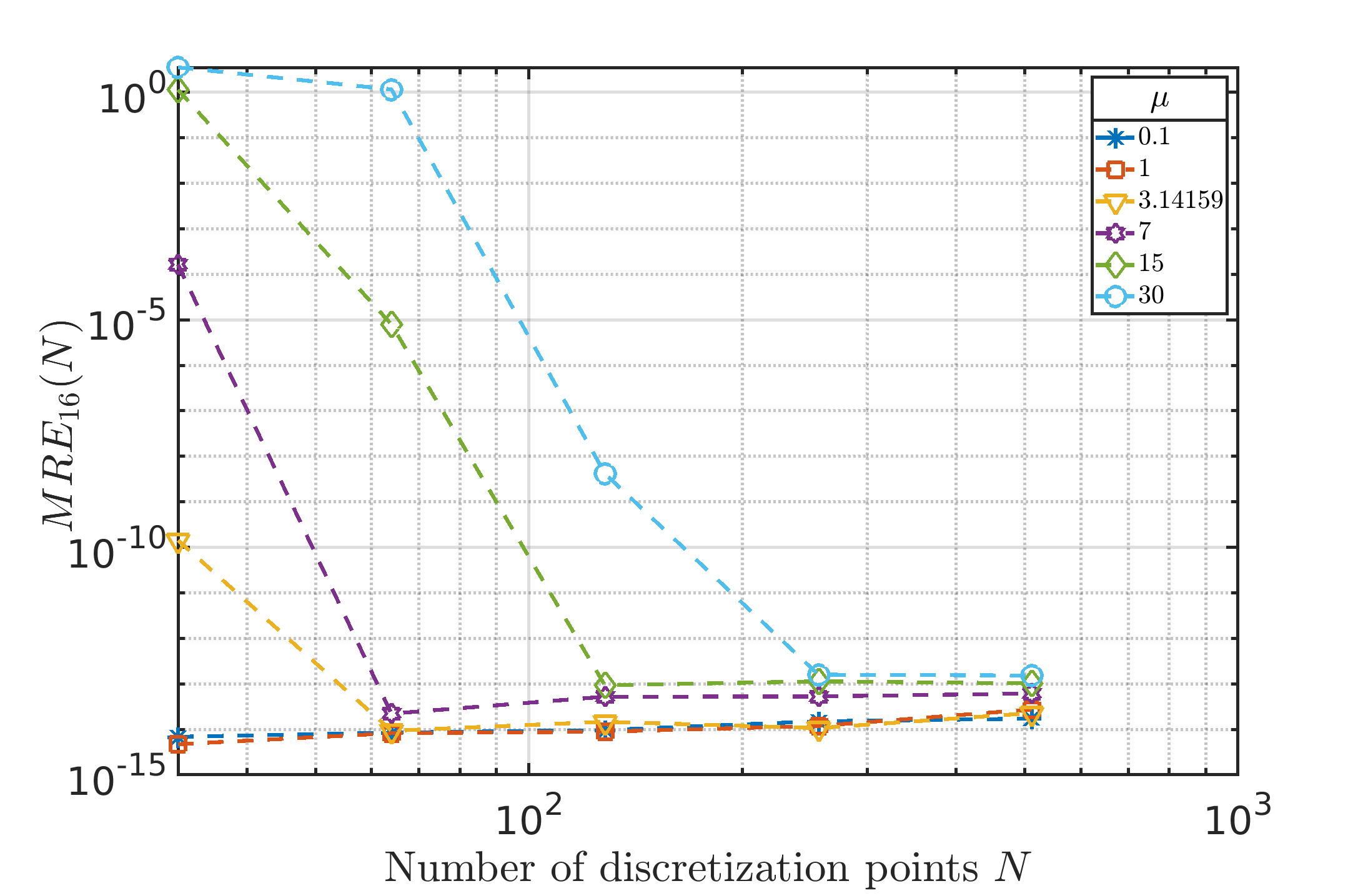}
		\includegraphics[width=0.495\linewidth]{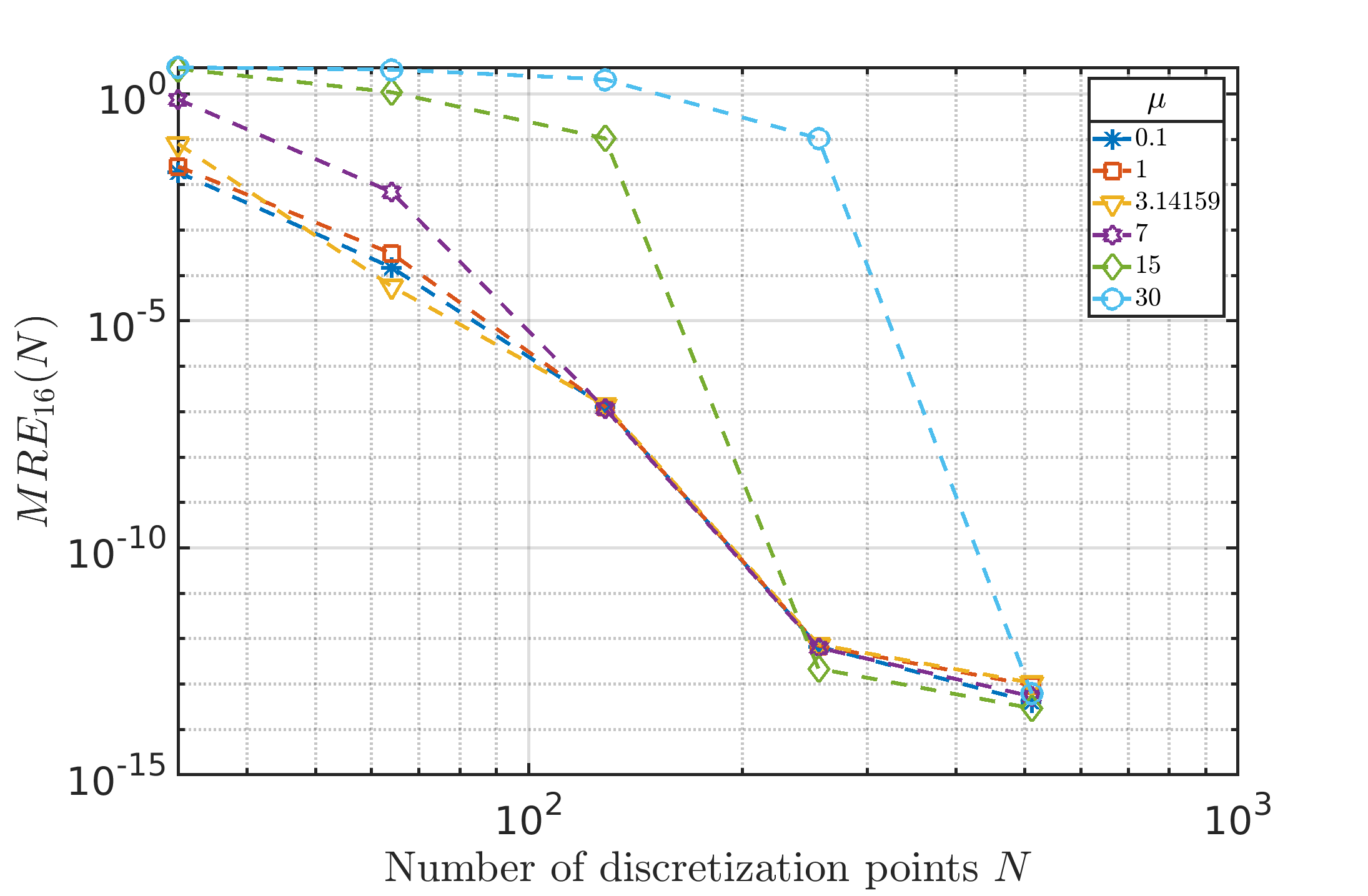}
	\caption{Convergence of first 16 Steklov-Helmholtz eigenvalues $\sigma_i(\mu)$ on (L) the unit disk and (R) a kite-shaped region \autoref{eq:kite}. Shown is the mean relative error $\text{MRE}_{16}(N)$ (see \autoref{eq:MRE}). Spectral convergence is observed. The number of points $N$ required to achieve a prescribed accuracy clearly depends on the wavenumber $\mu$. All $\mu$ here are non-exceptional.}
	\label{fig:diskconvergencebasic}
\end{figure}
\begin{table}[H]
	\parbox{.46\linewidth}{
		\centering
		\begin{tabular}{ |c||c|c|  }
			\hline 
			$\mu$ & $N$ & $\sigma_{N,100}(\mu)$\\
			\hline
			0.1 &  True &  {\bf{ 49.99990196069045}}	\\
			&  100 &   {\bf{49.99}}500649892375\\
			&  120 & {\bf{49.99990196069}}154	\\
			&  140 & {\bf{49.99990196069}}178\\
			\hline
		\end{tabular}
		\caption{The $100^{th}$ eigenvalue for $\mu = 0.1$ on the unit disk.}
		\label{tab:100sig_p1}
	}
	\hfill
	\parbox{.45\linewidth}{
		\centering
		\begin{tabular}{ |c||c|c|  }
			\hline 
			$\mu$ & $N$ & $\sigma_{N,100}(\mu)$\\
			\hline
			30 &  True &   {\bf{43.91970692071435}}	\\
			&  160 & {\bf{43}}.79333214528069	\\
			&  180 & {\bf{43.91970}}592453353	\\  
			&  200 & {\bf{43.919706920714}}10	\\
			\hline
		\end{tabular}
		\caption{The $100^{th}$ eigenvalue for $\mu = 30$.}
		\label{tab:100sig_30}
	}	
\end{table}
We next show numerical convergence of eigenvalues for a square of side length $\pi$ and two wavenumbers: $\mu = 7$ and $\mu=5$ (an exceptional wavenumber). {The true eigenvalues can be solved for by separation of variables. They are intersections of certain tangent and hyperbolic tangent functions dependent on $\mu$ and the side length, which we numerically approximate to high precision in \texttt{Maple}}. We used both graded mesh quadrature and the RCIP quadrature (implemented in {\tt ChunkIE}). For the graded mesh quadratures, we use grading degrees  $p=3,4,5,6$. The convergence results are presented in \autoref{fig:square} and  \autoref{fig:biomodsquarecomparsion} respectively. We note from \autoref{fig:square} that some  eigenvalues are approximated correctly almost to machine precision even high into the spectrum; these correspond to smooth eigendensities. At the (exceptional) wavenumber $\mu=5$, recall that  the single layer matrix is not invertible. Nonetheless, using Matlab's {\tt eig} we present relative errors for the first 90 eigenvalues. In the left subplot we document the performance of a graded mesh approach and the RCIP quadratures. On the right, we combine the BIO-MOD approach with these two different quadrature approaches. We can see that the better conditioning of the single and double layer matrices in the RCIP approach leads to better accuracy. Recall that in the BIO-MOD approach we discard {\it all} the singular values of the single layer matrix below a threshold; if the matrix is poorly conditioned (as in the graded mesh case), BIO-MOD will remove important information which then impacts accuracy. 
\begin{figure}
\centering
		\includegraphics[width=0.495\textwidth]{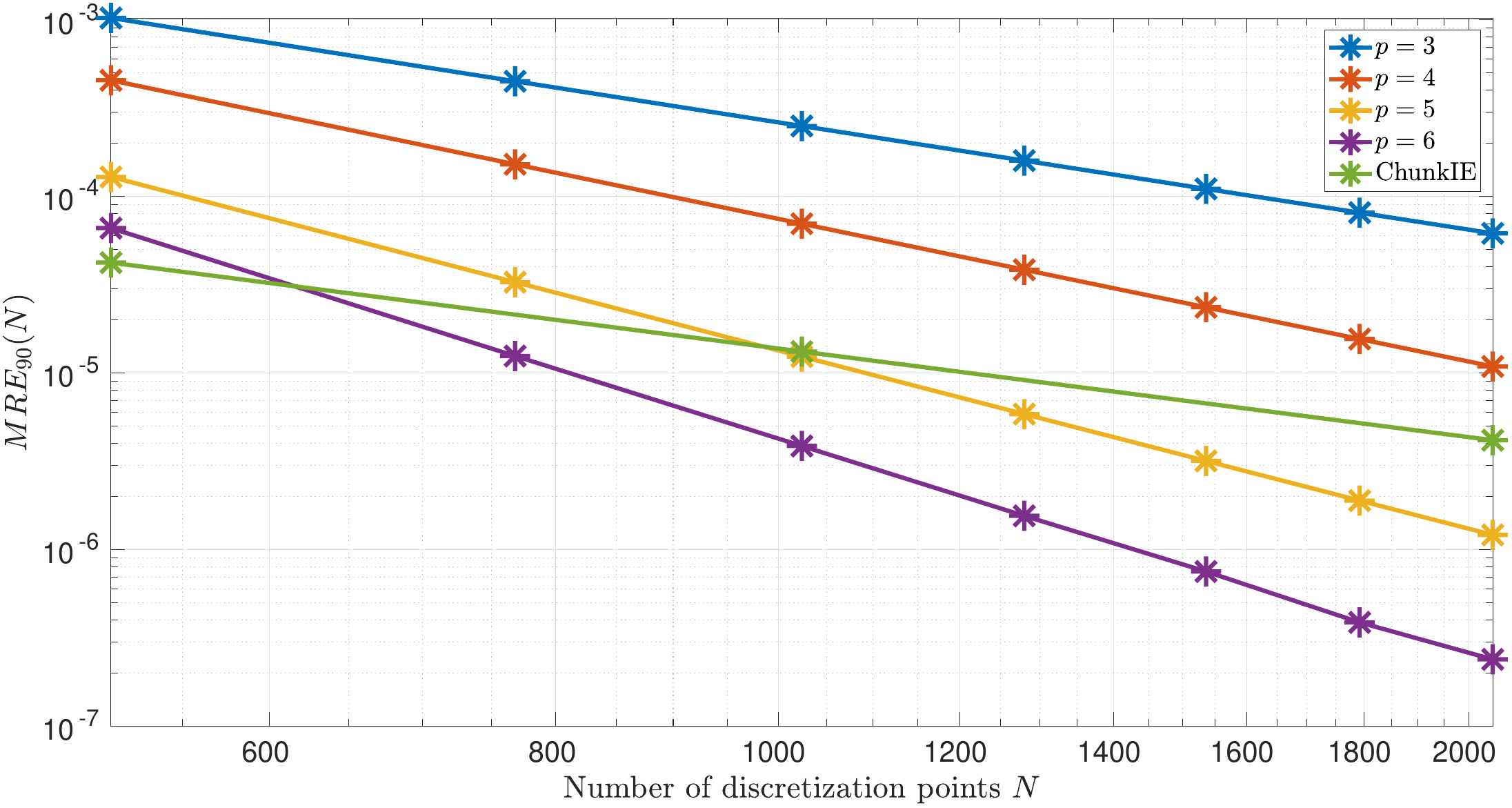}              
		\includegraphics[width=0.495\textwidth]{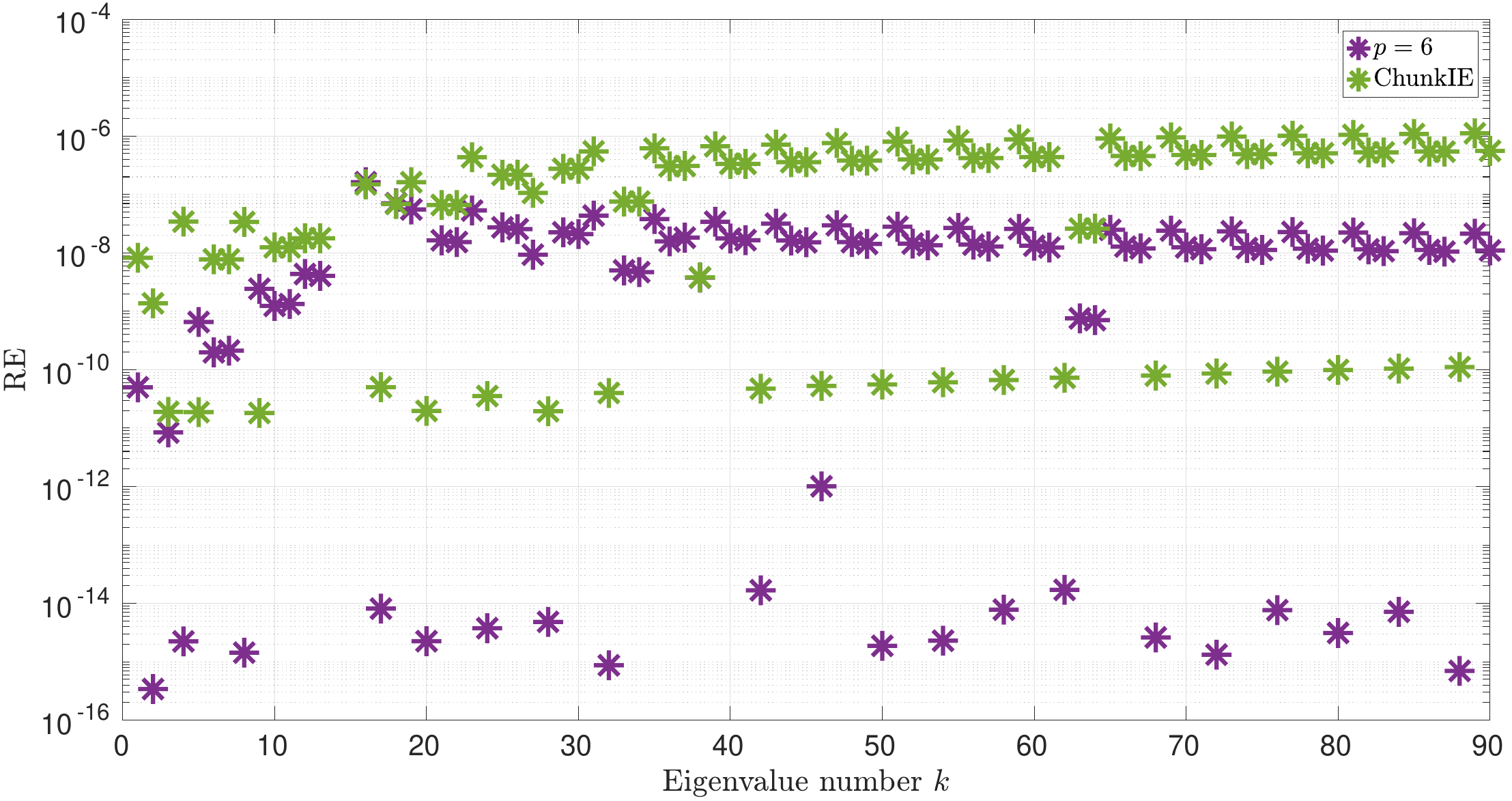}              
	\caption{Relative errors for Steklov-Helmholtz eigenvalues on a square of side length $\pi$, non-exceptional wavenumber $\mu=7$. (L) Plots of $\text{MRE}_{90}(N)$ v/s $N$ are shown for polynomially-graded meshes of degrees 3-6, as well as the RCIP approach (see \autoref{eq:MRE}). (R) The  relative errors of the first 90 eigenvalues is shown for a graded mesh approach with $p = 6,$ $N=2048$ points, and $TOL=1e-14$ in the BIO-MOD method.}   
	\label{fig:square}
\end{figure}
\begin{figure}\centering
		\includegraphics[width=0.495\linewidth]{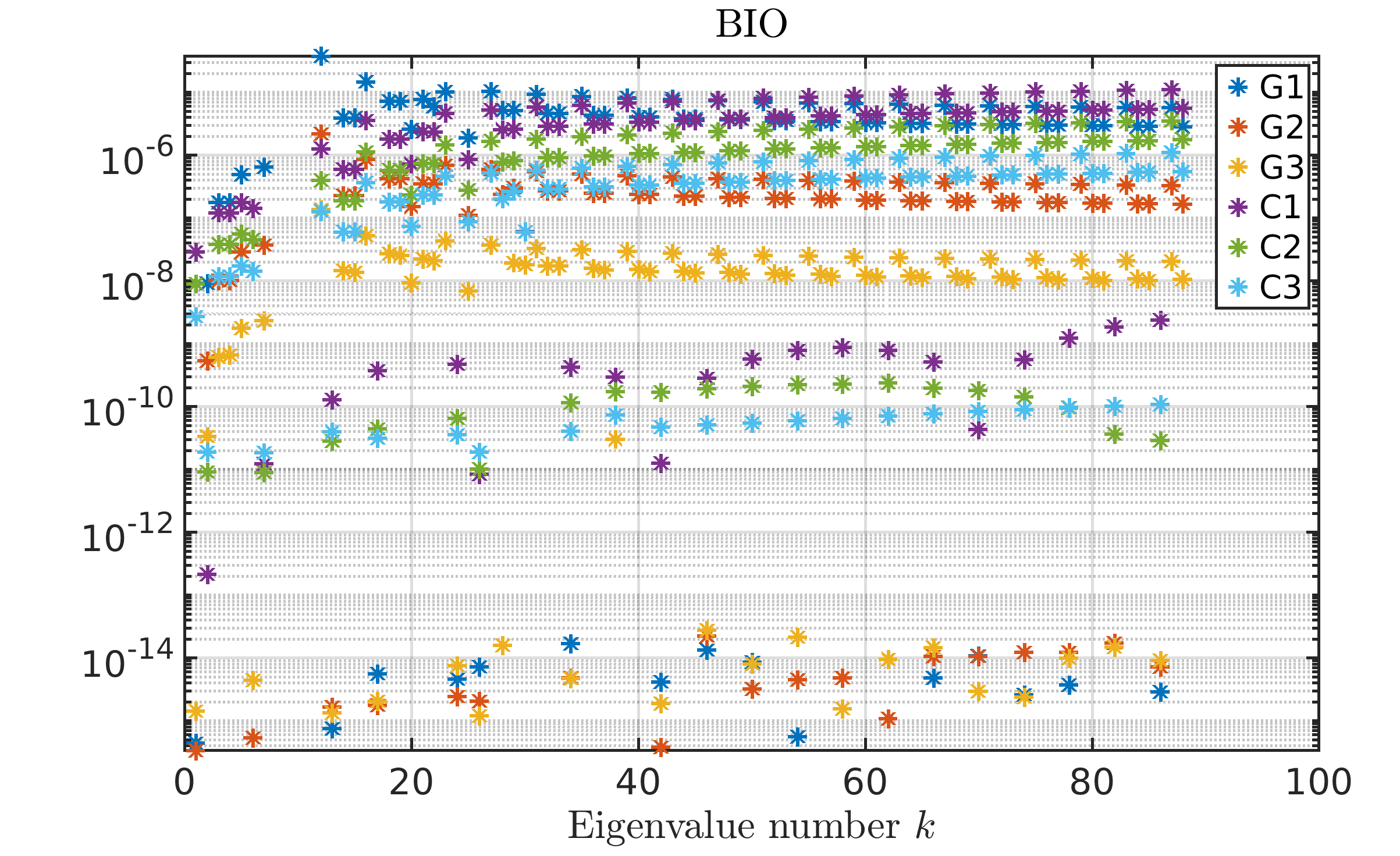}
	\includegraphics[width=0.495\linewidth]{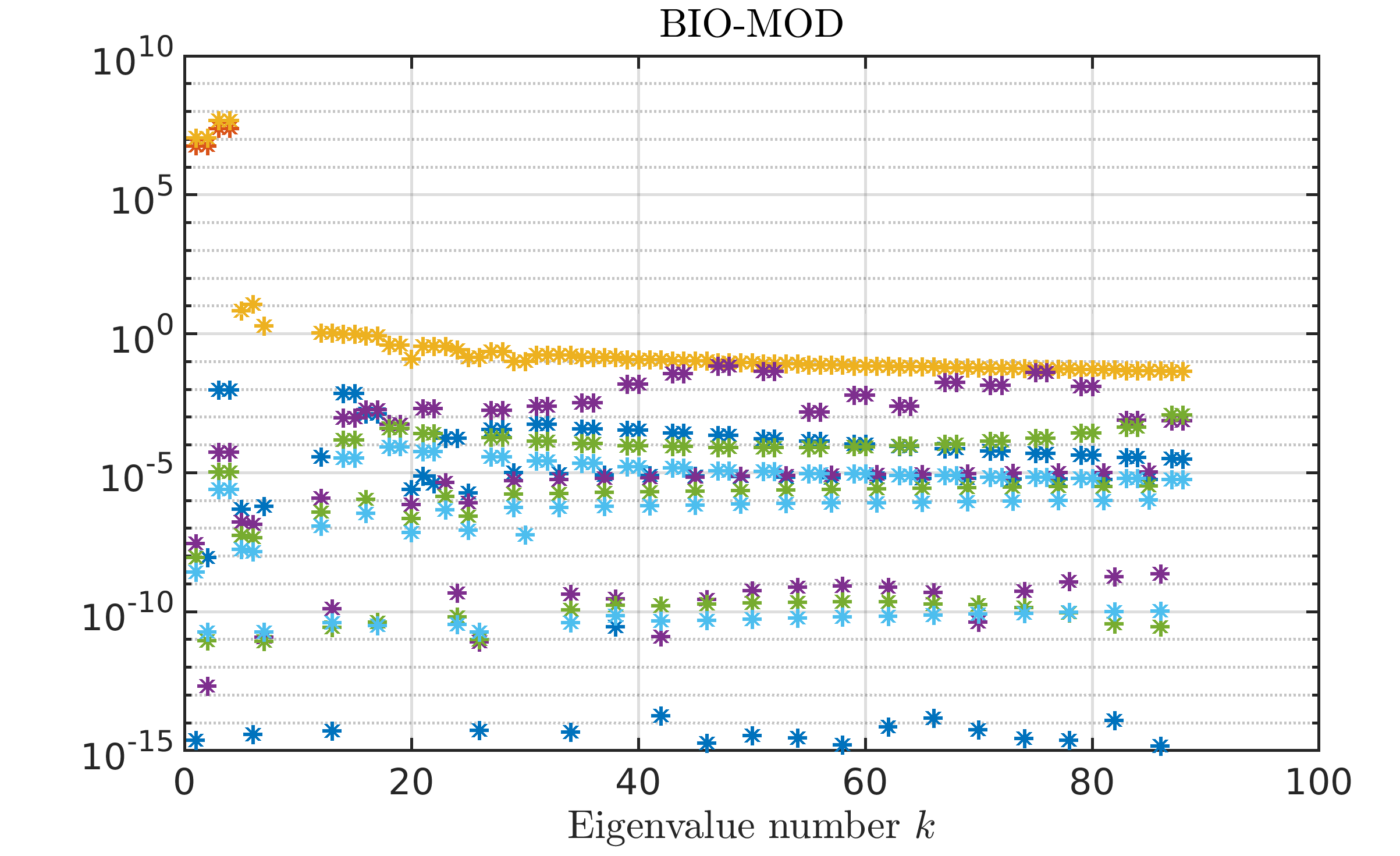}
	\caption{Relative errors of the first 88 (finite) eigenvalues on a square, wavenumber $\mu=5$ is exceptional.  We document the performance of the RCIP-based (`C') and the graded mesh quadratures (`G'), using $N=512 (1), 1024 (2), 2048 (3)$ quadrature points in both methods. Left: using the regular BIO approach (and removing the large negative eigenvalues resulting from the exceptional wave number). Right: BIO-MOD approach.}
	\label{fig:biomodsquarecomparsion}
\end{figure}
We now demonstrate robustness of the BIO-MOD approach even for wavenumbers close to exceptional ones. We first observed that for the unit disk, both the BIO and the BIO-MOD approaches give comparably accurate results, even very close to the exceptional wavenumbers ({naturally this is $TOL$ dependent}). So the BIO-MOD approach confers no advantage in this setting. However, the situation changes for smooth domains with less symmetry.
In \autoref{fig:alg1_other}, we first located an interior Dirichlet eigenvalue $\mu_D^2$ for two domains $\Omega_A$ (from \cite{barnett2014fast}), $\Omega_B$, whose boundaries have the polar parametrizations 
\begin{align*}
\Omega_A: \, \Gamma &:= \{r(\theta) = 1+0.3\cos(3(\theta+0.2\cos \theta)),\,\, \theta \in [0,2\pi)\},\\
\Omega_B: \, \Gamma &:=\{r(\theta) =(\exp(\cos \theta)\cos^2(2\theta)+\exp(\sin \theta)\sin^2(2\theta)), \theta \in [0,2\pi) \}.
\end{align*}

We then compute the Steklov-Helmholtz eigenvalues $\sigma_k(\mu_D)$ using both the BIO and the wavenumber-robust BIO-MOD approach. We might expect only part of the spectrum (the large negative eigenvalues) to not be correctly computed. However, the rank-deficiency of $\mathtt{S_\mu}$ polluted {\it all} the computed eigenvalues.
\begin{figure}[H]
	\begin{subfigure}[b]{0.45\textwidth}
		\centering
		\includegraphics[width=\textwidth]{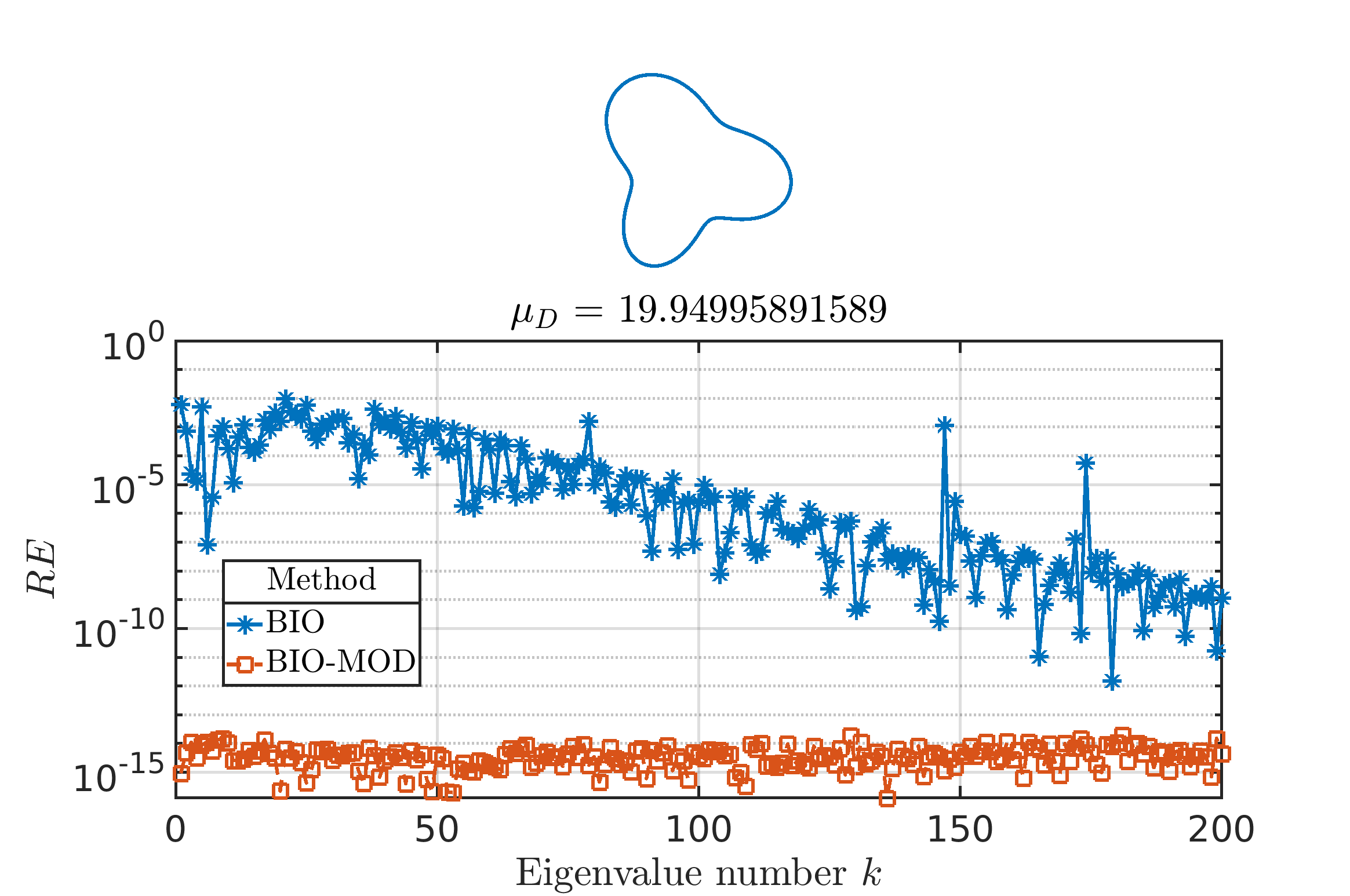}   
		\label{fig:alg1_asyp}
	\end{subfigure}
	\hfill
	\begin{subfigure}[b]{0.45\textwidth}
		\centering
		\includegraphics[width=\textwidth]{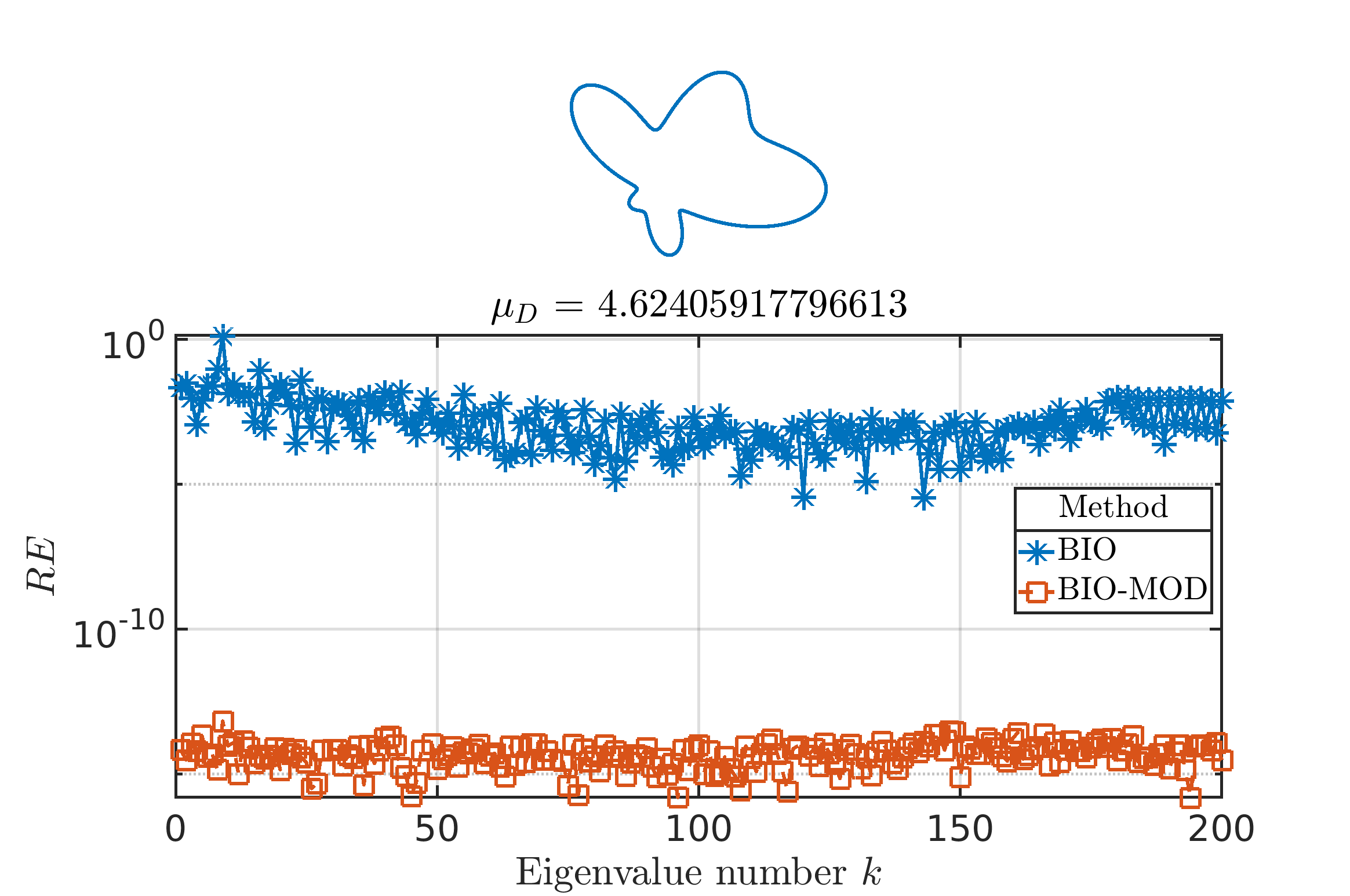}  
		\label{fig:alg1_but}
	\end{subfigure}
	\caption[] { Relative errors of the first 200 Steklov-Helmholtz eigenvalues $\sigma_{k}(\mu_D)$ at exceptional wavenumbers $\mu_D$ for (left) $\Omega_A$ and (right) $\Omega_B$. Tolerance is set at $1e-10$ for the BIO-MOD algorithm, and the number of discretization points $N=1024$ for both. Eigenvalues are computed using the standard BIO approach (blue) and the wavenumber robust approach BIO-MOD (red). Computed eigenvalues are compared to highly refined calculations using BIO-MOD. }   
	\label{fig:alg1_other} 
\end{figure}
The use of boundary integral operators easily allows us to consider eigenvalue problems with {different boundary conditions on different boundary pieces $\Gamma_L$. In particular, we test the sloshing eigenvalue problem where we impose}  Neumann conditions on part of the boundary, and Steklov conditions on the rest. High accuracy is retained. In the next example, we consider a semicircle (radius 1) with $\frac{\partial u}{\partial n}=0$  on the straight side and the Steklov boundary condition on the curved side. The wavenumber is set to $\mu=8$. The spectrum  $\sigma_k(8)$ is a subset of the Steklov-Helmholtz spectrum of the disk (corresponding to eigenfunctions with reflectional symmetry across the straight edge). In \autoref{fig:sc2}, we show the relative errors for the computed eigenvalues. We observe uniform recovery of 10 digits for $p = 6$ and $N \geq 600$.

\begin{figure}[H]
	\centering
	\begin{subfigure}[b]{0.49\textwidth}
		\centering
		\includegraphics[width=\textwidth]{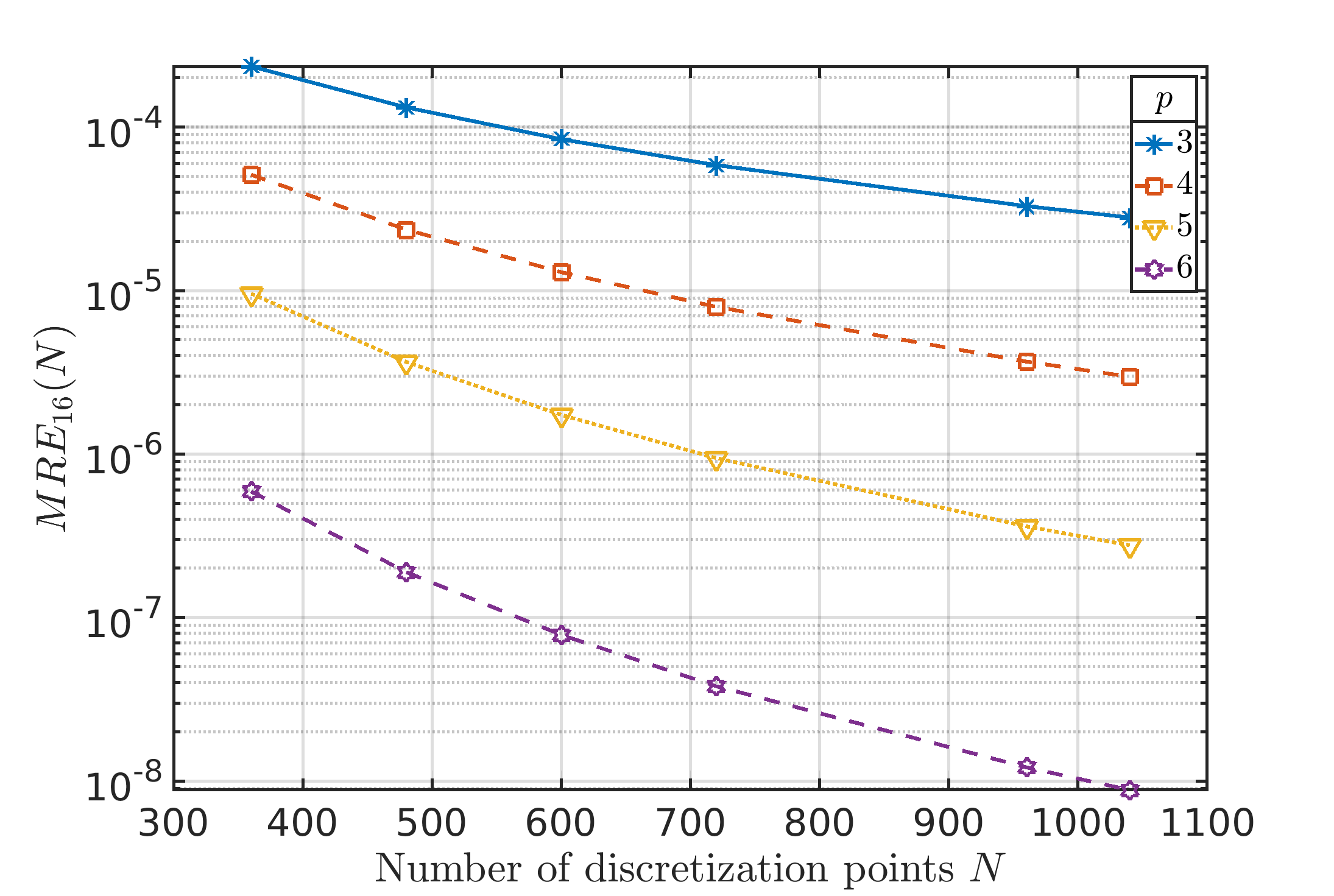}              \caption{{\small }}
		\label{fig:scn}
	\end{subfigure}
	\hfill
	\begin{subfigure}[b]{0.49\textwidth}
		\centering
		\includegraphics[width=\textwidth]{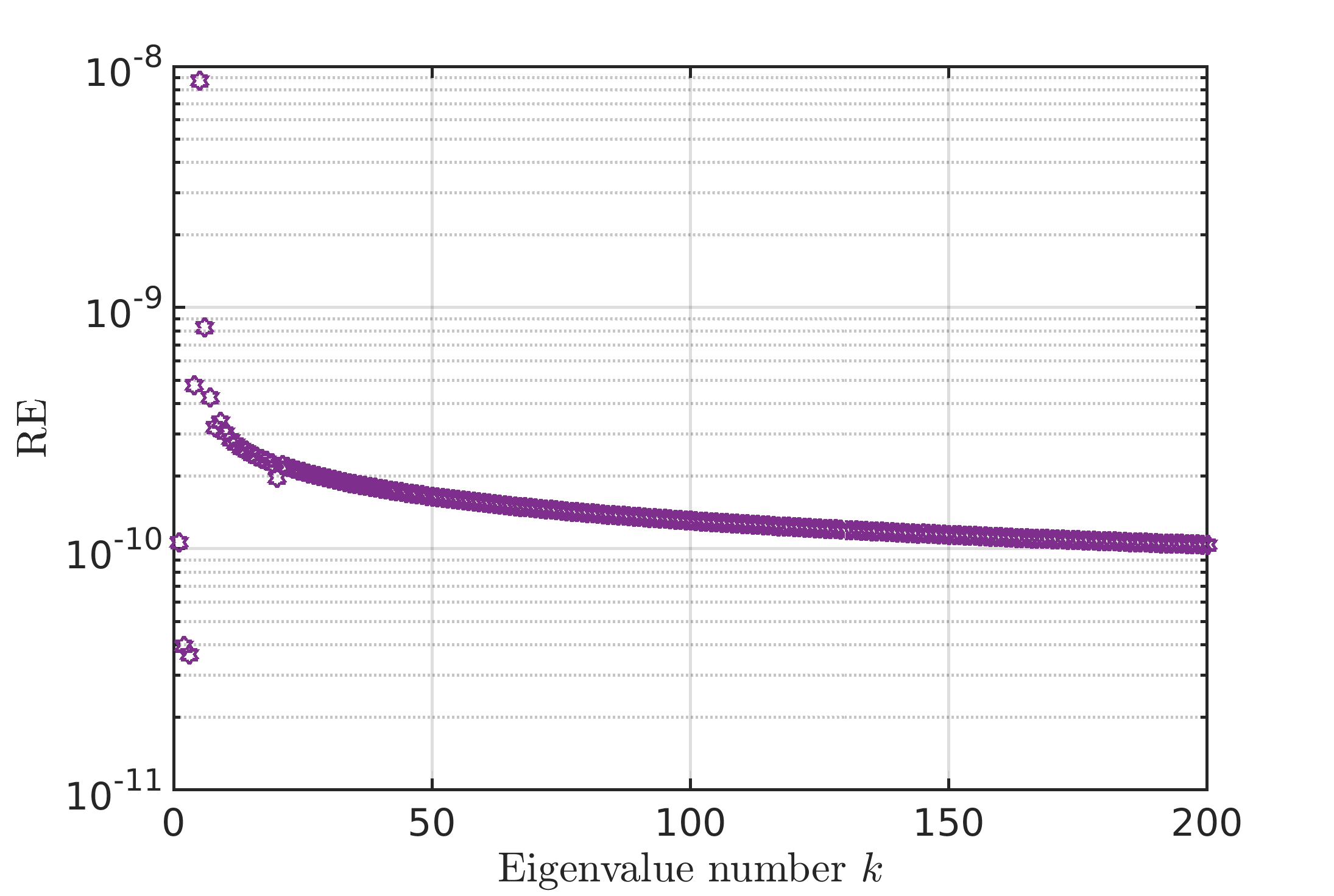}              
\caption{}
		\label{fig:scre}
	\end{subfigure}
	\caption[]                        {{\small Steklov-Neumann eigenvalues on a semicircle with radius 1, and wavenumber $\mu=8$.  (a) Norm of relative error of the first Steklov-Neumann 16 eigenvalues as a function of the total number of boundary points $N$. Different curves correspond to different choices of polynomial grading degree. (b) Relative errors for the $kth$ Steklov-Neumann eigenvalues, $k=1,\dots,200$.}}   
	\label{fig:sc2}
\end{figure}

Boundary integral approaches are successful even for challenging problems involving re-entrant corners, where eigenfunctions may have poor regularity. In \autoref{tab:l_ns} we present $\sigma_k(\mu), k=1,\dots,6$ for the L-shaped domain in \cite{Ma22}, with {wavenumber $\mu=2$}. We compare the computed eigenvalues with those reported in Table 3 in \cite{SunTurner} and Table 6 of \cite{Ma22}. In \cite{SunTurner}, the authors use Lagrange finite elements and an extension of the spectral indicator approach, and convergence is expected to be quadratic in the mesh size for regular eigenfunctions. An  integral operator approach via a direct ansatz is used in combination with the spectral indicator method in \cite{Ma22}, though it appears a Nystr\"om discretization was used for this domain. This may explain the two eigenvalues which appear missing. In \autoref{tab:l_ns} we fix $p=6$ and vary $N$. In both, the {`F}ine {G}rid' {(FG)} eigenvalues correspond to $N=1200,~p=6$.

\begin{table}
{{
	\centering
	\begin{tabular}{ |c||c|c|c|c|c|c|  }
		\hline 
		$N$         & $\sigma_1$ & $\sigma_2$ & $\sigma_3$ & $\sigma_4$ & $\sigma_5$ & $\sigma_6$ \\
		\hline
		FG & \bf{-2.5332135} & \bf{-0.8577893}  & \bf{-0.1245247}  & \bf{1.0852970}  & \bf{1.0911950}  & \bf{1.4169010}  \\
		320       & {\bf{\textcolor{blue}{-2.533}}}1625 & {\bf{\textcolor{blue}{-0.8577}}}702  & {\bf{\textcolor{blue}{-0.124}}}6061  & {\bf{\textcolor{blue}{1.08}}}48137  & {\bf{\textcolor{blue}{1.09}}}04050  & {\bf{\textcolor{blue}{1.416}}}4033  \\
		640       & {\bf{\textcolor{blue}{-2.5332}}}110 & {\bf{\textcolor{blue}{-0.85778}}}83  & {\bf{\textcolor{blue}{-0.12452}}}87  & {\bf{\textcolor{blue}{1.0852}}}737  & {\bf{\textcolor{blue}{1.0911}}}569  & {\bf{\textcolor{blue}{1.416}}}8767  \\
		960       & {\bf{\textcolor{blue}{-2.533213}}}2 & {\bf{\textcolor{blue}{-0.857789}}}1  & {\bf{\textcolor{blue}{-0.12452}}}52  & {\bf{\textcolor{blue}{1.08529}}}42  & {\bf{\textcolor{blue}{1.09119}}}05  & {\bf{\textcolor{blue}{1.416}}}8981  \\
		\hline
	\cite{SunTurner}	& -2.533099  & -0.857457  & -0.124494  & 1.085374   & 1.091319   & 1.417098   \\
		\hline
	\cite{Ma22}	& -2.5332    & -0.8578    & -0.1246    & -          & 1.0909     & -          \\
		\hline
	\end{tabular}
	\caption{Convergence of first 6 eigenvalues $\sigma_k(\mu)$, $k = 1,\dots,6$ of the L-shaped domain with $\mu^2 = 4$. Here, degree of polynomial grading $p = 6$ is fixed. `Fine Grid’ (FG) eigenvalues correspond
		to $N = 1200$. Final 2 rows include Steklov-Helmholtz eigenvalues reported in \cite{SunTurner} and \cite{Ma22}.}
	\label{tab:l_ns}
}
}	
\end{table}

\section{Some questions in spectral geometry}
We now use the BIO-MOD approach developed above for some applications in spectral geometry. We begin by examining questions of spectral asympotics. Our approach yields highly accurate and wavenumber robust approximations even for larger eigenvalues, and we can study how (say) the curvature of the boundary impacts the spectrum. We next examine some questions in optimization, for which we first propose some scale-invariant spectral quantities to examine. Since the integral operator approach is very flexible, we are also able to examine a variant of Weinstock's conjecture on annular domains.

\subsection{Spectral asymptotics}

The asymptotic behaviour of the Steklov-Laplace spectrum for both smooth domains and Lipschitz domains has been described extensively. We can use the BIO-MOD approach to compute the Steklov-Helmholtz spectrum to study the impact of wavenumber $\mu$ on the spectral asympotics. 

In \autoref{fig:goal1}, we show the first several Steklov-Helmholtz eigenvalues on a disk, an ellipse and a kite-shaped domain (all equi-perimeter, parametrized as in \eqref{eq:kite}).
\begin{figure}
\centering
\includegraphics[width=.32\textwidth]{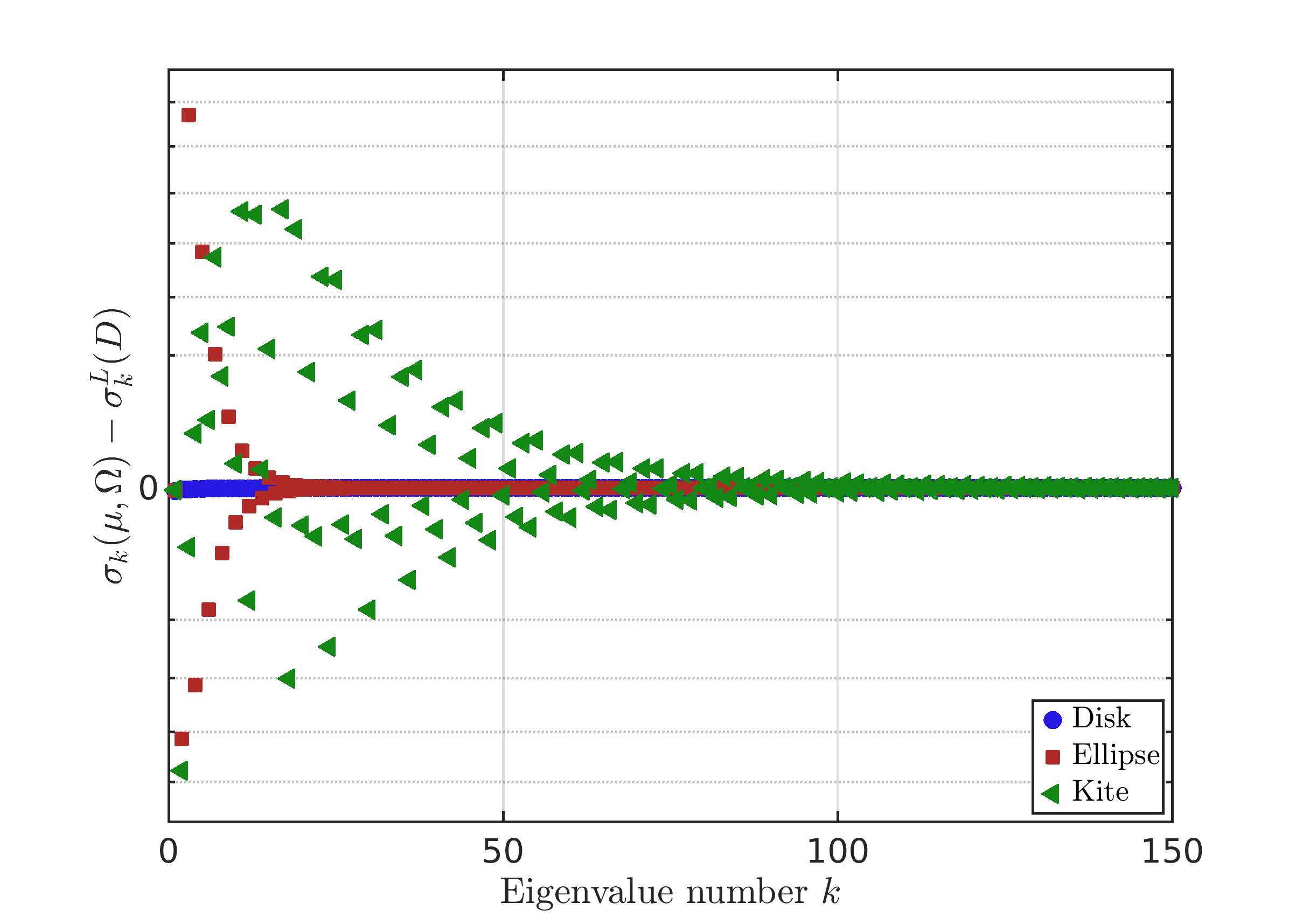}
	\includegraphics[width=.32\textwidth]{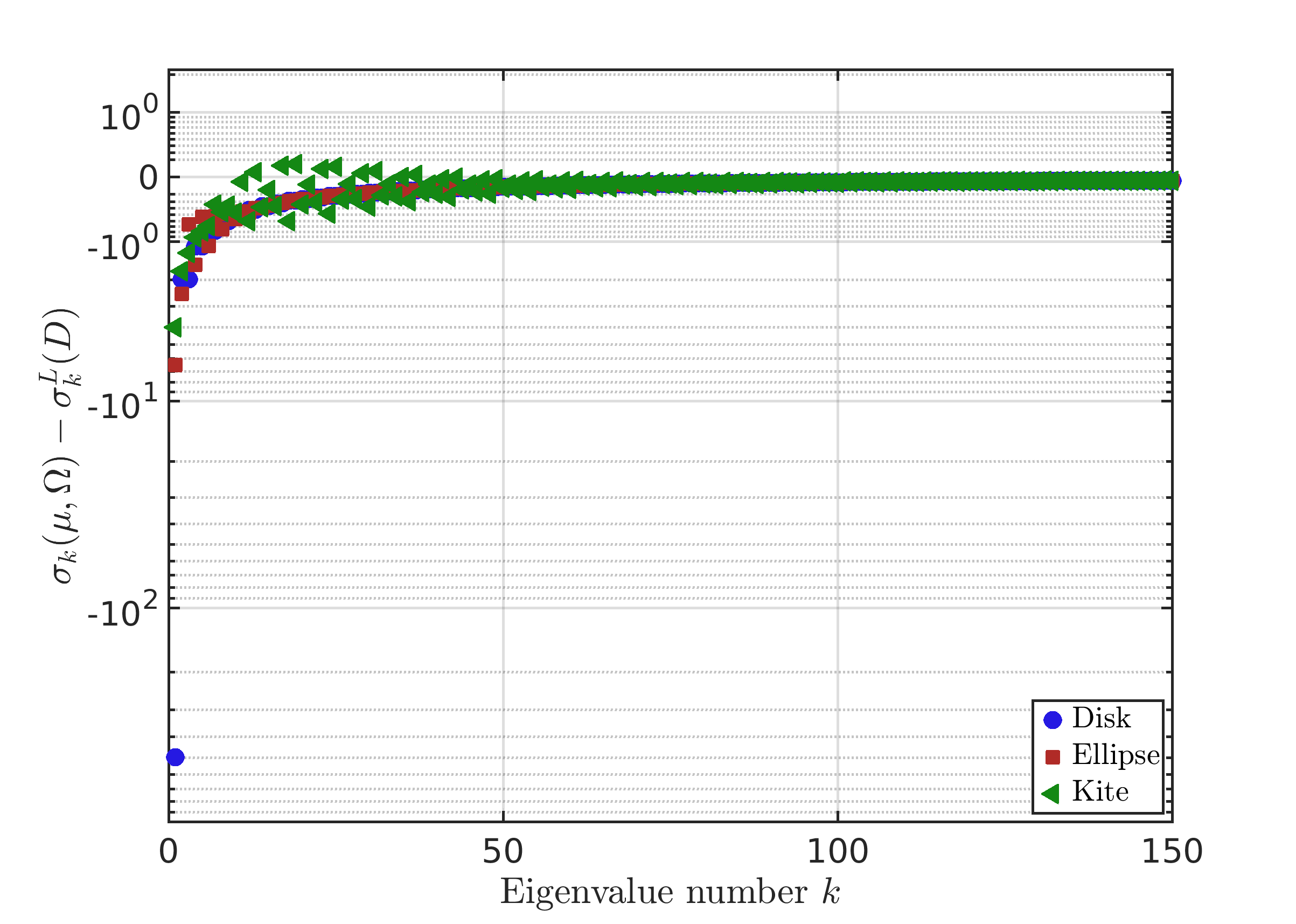}
	\includegraphics[width=.32\textwidth]{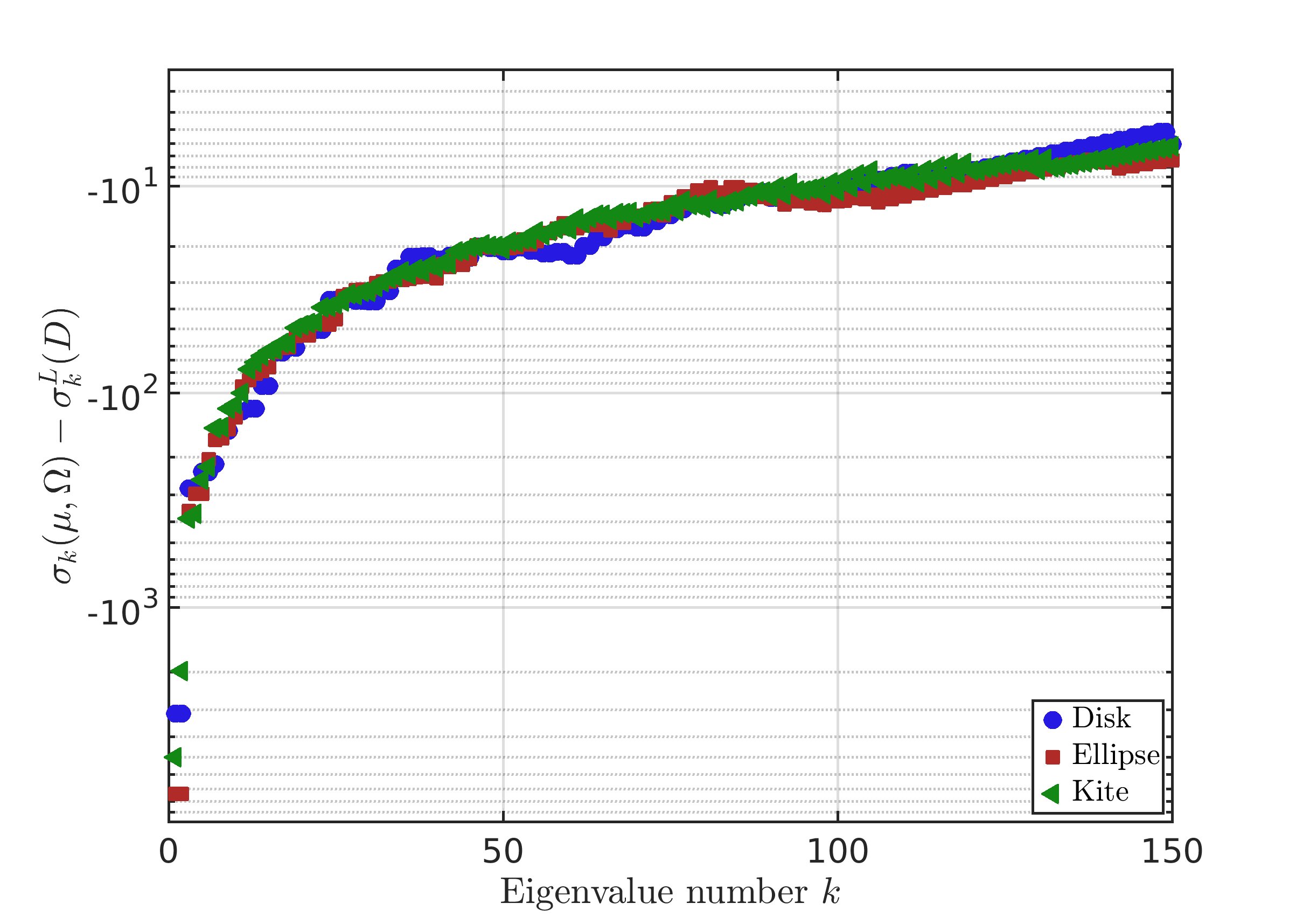}
	\caption{Spectral asymptotics on smooth domains. We plot $\sigma_k(\mu,\Omega) - \sigma^L(\mathbb{D})$ between the Steklov-Helmholtz eigenvalue on $\Omega$ and the Steklov-Laplace eigenvalues on $\mathbb{D}$. Domains $\Omega$ are $\mathbb{D}$ (blue), and equiperimeter ellipse (red) and an equiperimeter kite \eqref{eq:kite}. Wavenumbers  (Left) $\mu=0.1$, (Middle) $\mu=2.4$, (Right) $\mu = 43$. Recall all the Steklov-Laplace eigenvalues $\sigma^L_k\geq 0$. } 
	\label{fig:goal1}
\end{figure}
In \autoref{fig:goal1} we present 3 plots comparing $\sigma_k(\mu)-\sigma_k^L(\mathbb{D}), k=1,\dots,150$ for three choices of wavenumbers. In each subplot, we record $\sigma_k(\mu,\Omega)-
\sigma^L(\mathbb{D})$ for $\Omega${;}  equiperimeter disk, ellipse and `kite'.
From the left subplot of \autoref{fig:goal1}, we see that the Steklov-Helmholtz spectra very quickly become close to that of the Steklov-Laplace problem. This is not surprising, since the wavenumber $\mu=0.1$. We point out that the deviation of the `kite' domain spectrum persists into the spectrum. 
In the middle subplot \autoref{fig:goal1}, the chosen wavenumber $\mu=2.4$ is close to an exceptional value for the disk; $\sigma_1(2.4, \mathbb{D}) - \sigma_1^L(\mathbb{D}) = \sigma_1(2.4, \mathbb{D})$ is a large negative number. For large $\mu$ (right subplot) the effects of domain curvature become clearly visible in the pre-asymptotic regime.

We next examine the impact of boundary curvature on the Steklov-Helmholtz spectrum for a kite-shaped domain with parametrization as in \eqref{eq:kite}, for $\mu$ non-exceptional. We vary the parameter $\kappa$, and record the perimeter-scaled eigenvalues  $|\Gamma|\sigma_k(\mu), k=1,\dots,20$
in \autoref{fig:goal2a}; beyond this value, the variation in the spectra between domains is very small.

\begin{figure}
	\centering
	\includegraphics[width=0.5\linewidth]{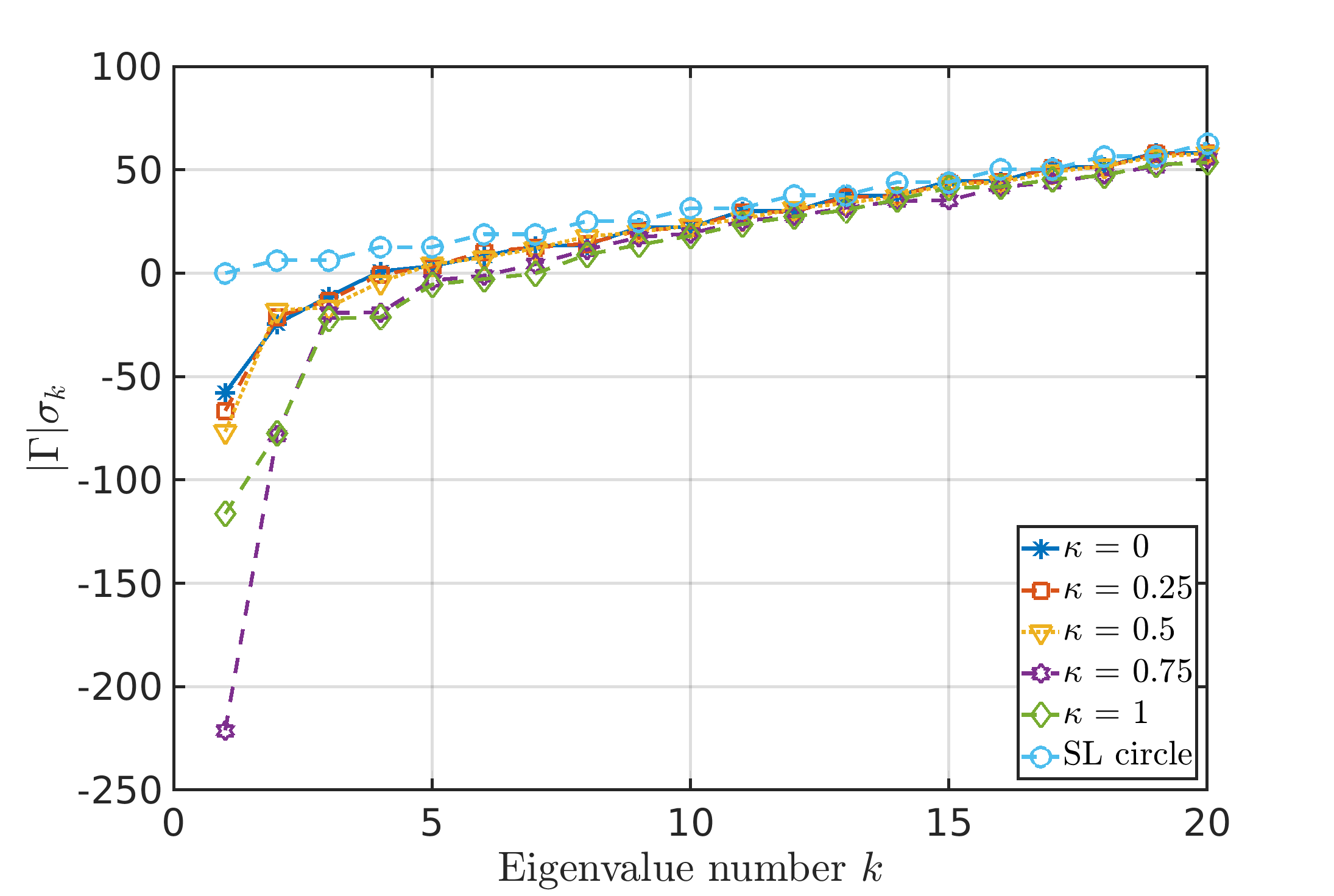}
	\caption{Impact of boundary curvature on spectral asymptotics. In this figure, we show $|\Gamma_\kappa|\sigma_k(\mu, \Omega_\kappa)$ for a range of kite-shaped domains. In this experiment the wavenumber $\mu=\pi$ and $N=500$.}
	\label{fig:goal2a}
\end{figure}
The spectral asymptotics of the Steklov-Laplace eigenvalues on curvilinear polygons has been precisely described in \cite{SherQM}, with remarkably precise approximations via {\it quasimodes} that are described in terms of the sidelengths and angles of polygons. The authors provide explicit formulae for quasimodes $QM_k(\Omega)$ for some shapes $\Omega$, and it is shown that
$$ \sigma_k(0,\Omega) - QM_k (\Omega) \rightarrow 0.$$
We are unaware of similar results for the Steklov-Helmholtz problem. 
 
In \autoref{fig:goal3mu5} we record the variations
 $$ \frac{|\sigma_k(\mu,\Omega) - QM_k(\Omega)|}{QM_k(\Omega)}.$$ For a triangle with sides 1,1,$\sqrt{2}$, there are two sequences of quasimodes: $QM_k= \pi k$ approximating multiplicity 2 eigenvalues, and $QM_k=\pi/\sqrt{2} (k-1/2)$, $k\in \mathbb{N}$. On a square with sides 1, the quasimodes approximate multiplicity-4 eigenvalues, and are of form
$$	 QM_k=(k-1/2)\pi.$$ As expected, we see the quasimodes eventually (in $k$)  approximate the true spectrum. The pre-asymptotic regime very clearly depends on the wavenumber.
\begin{figure}
	\centering
		\includegraphics[width=0.45\textwidth]{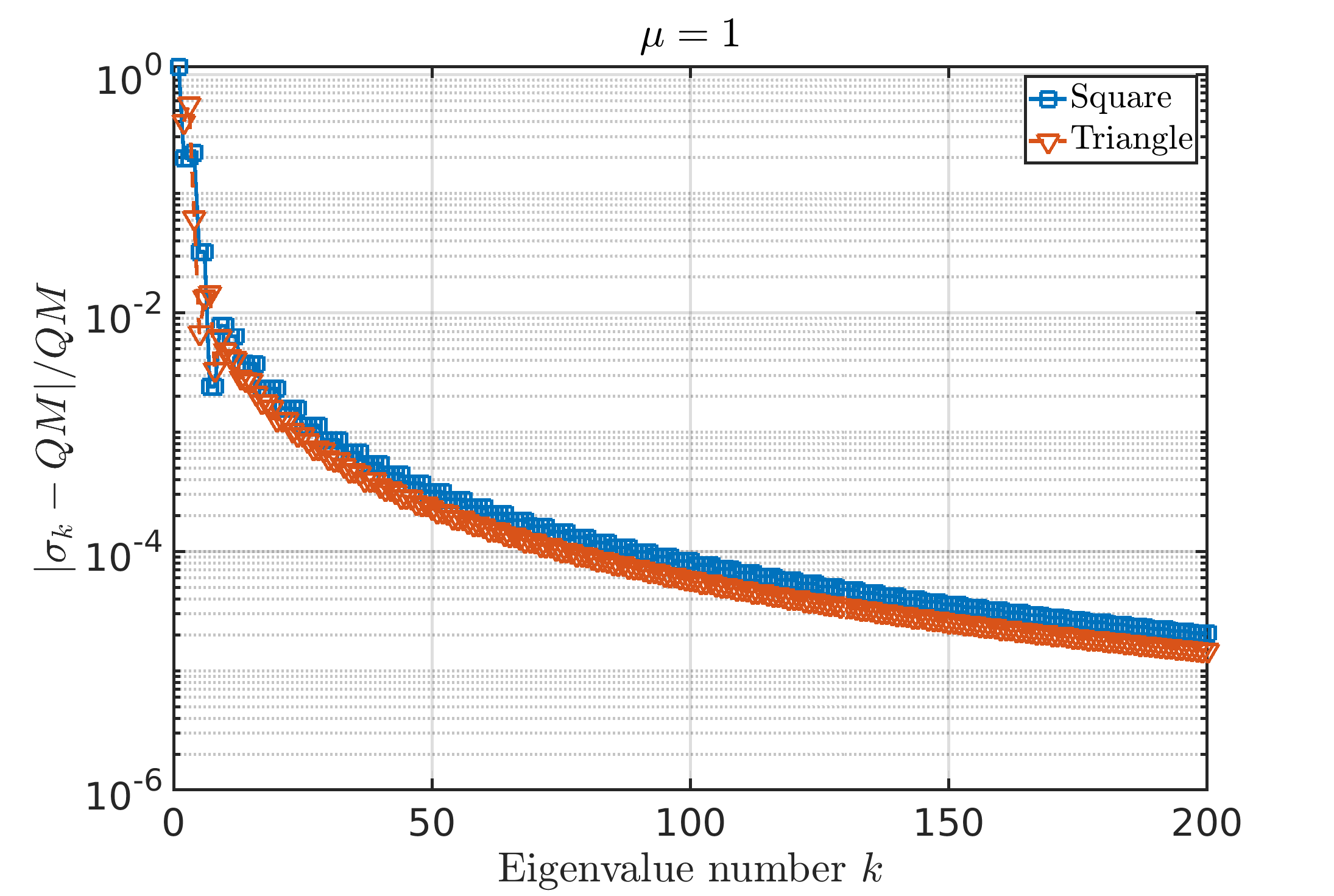}
	\includegraphics[width=0.45\textwidth]{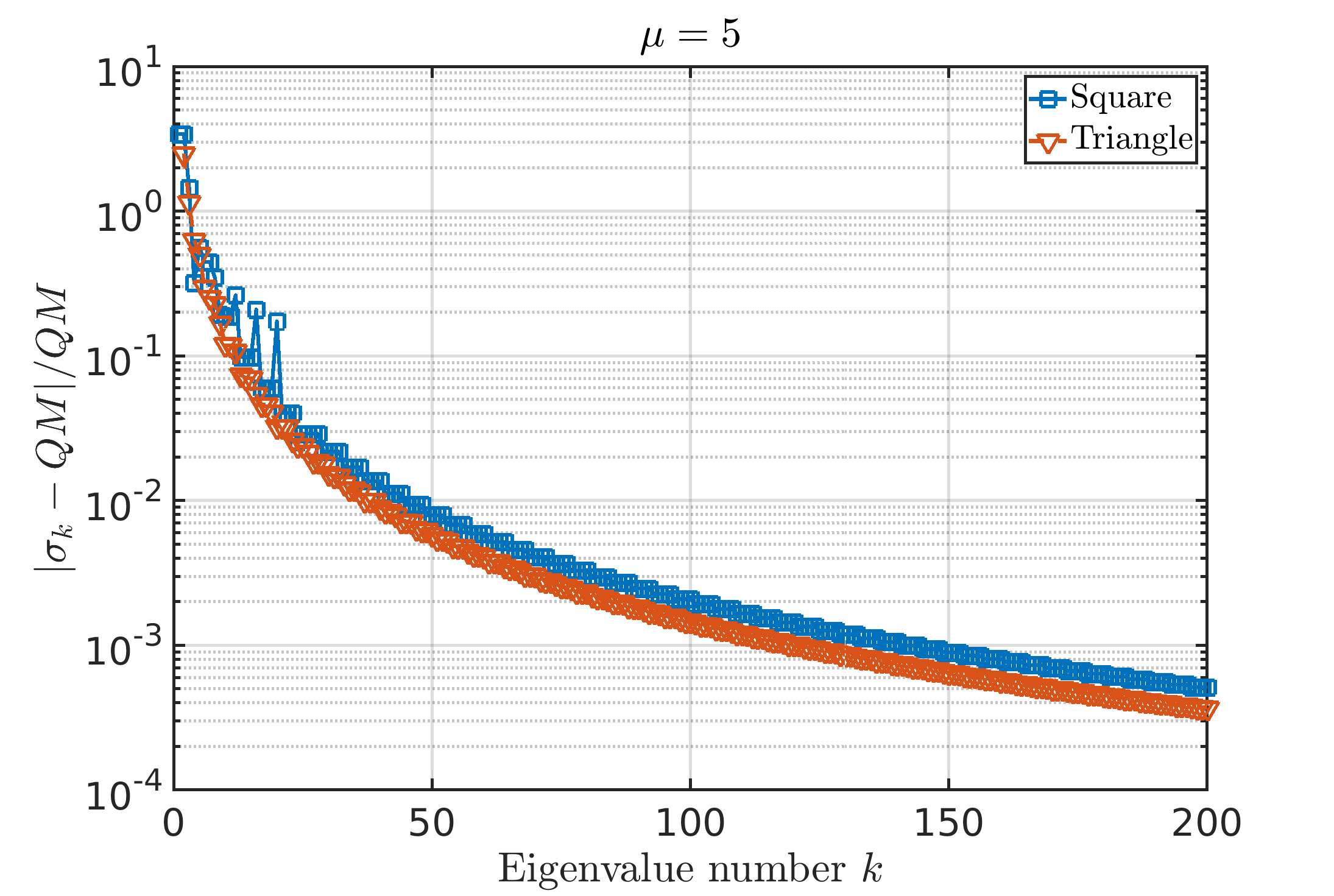}	
	\caption{Do the Steklov-Laplace quasimodes $QM_k(\Omega)$ capture the asymptotics of $\sigma_k(\mu,\Omega)?$ We report $| \sigma_k(\mu,\Omega) -QM_k(\Omega)|/QM_k(\Omega)$ v/s eigenvalue number $k$, for $ \Omega=$ a square and an isoceles triangle.  Left: $\mu=1$. Right: $\mu=5$. }
	\label{fig:goal3mu5}
\end{figure}

The Steklov-Helmholtz spectrum includes a (finite number of) negative eigenvalues, which depend on the wave number as well as the domain shape. In \autoref{fig:goal4}, we plot the number of negative eigenvalues as a function of wavenumber and perimeter for a range of shapes. We observe that as $\mu$ and $|\Gamma|$ increase, the number of negative values get closer to $\tfrac{|\Gamma|\mu}{2\pi}$.
\begin{figure}
	\centering
	\includegraphics[width=0.65\linewidth]{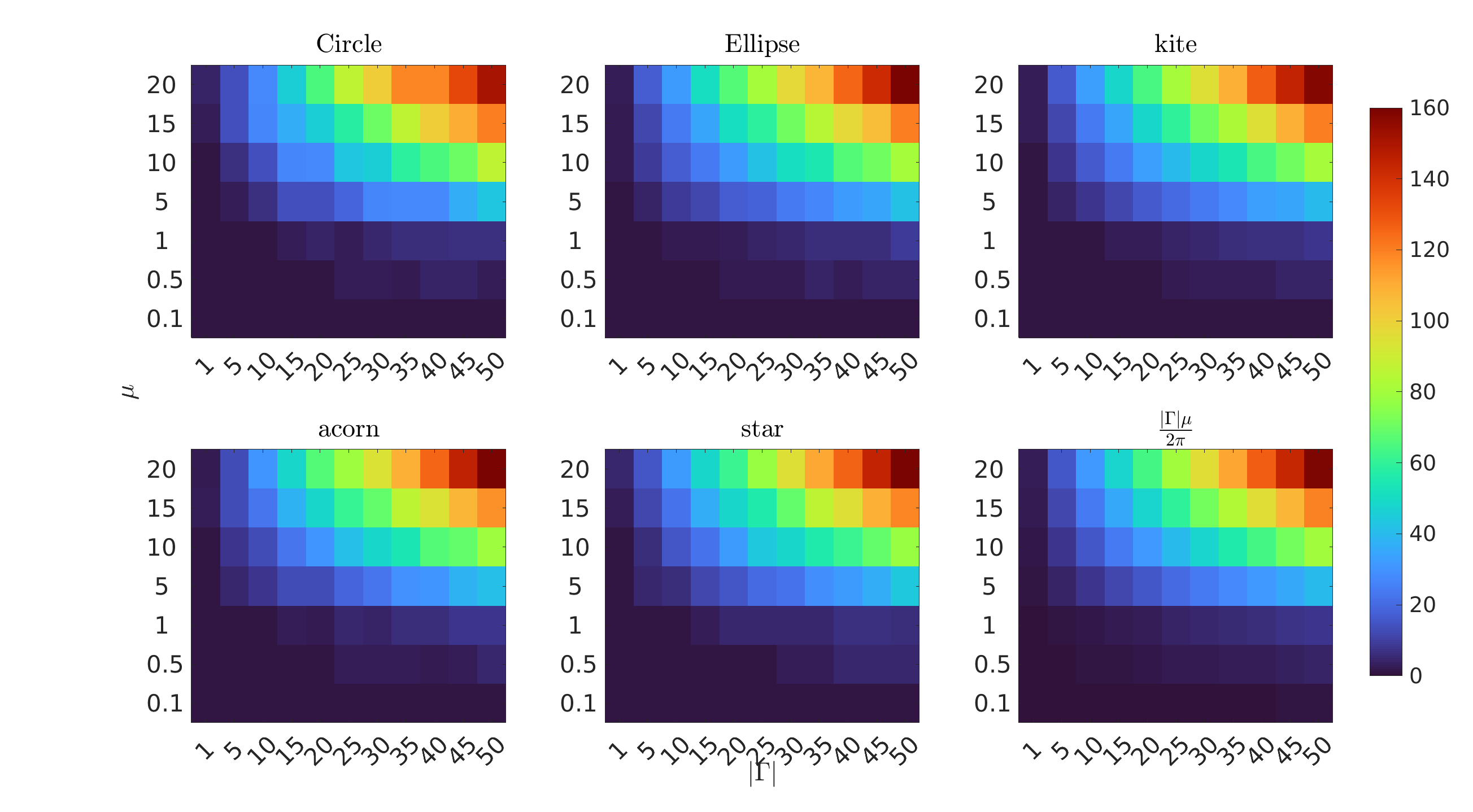}
	\caption{The number of negative eigenvalues in the Steklov-Helmholtz spectrum $\{\sigma_k(\mu,\Gamma)\}$ depends on wavenumber $\mu$ and perimeter. The shape of the domains does not seem to have as important. Bottom right figure is a plot of $\frac{|\Gamma|\mu}{2\pi}$.}
	\label{fig:goal4}
\end{figure}

\subsection{Spectral optimization}
A classical problem in spectral geometry concerns eigenvalue shape optimization: what is the optimizer of the $kth$ eigenvalue of an elliptic operator, under specific geometric constraints? Unsurprisingly, the answer crucially depends on the constraint. For instance, if we constrain the volume of domains $|\Omega|$ then the first Laplace-Dirichlet eigenvalue $\lambda_1$ is minimized by a ball. This fact is nicely expressed through the the Rayleigh-Faber-Krahn inequality, which in $\mathbb{R}^2$ reads:
\begin{equation}
	|\Omega|\lambda_1(\Omega)\geq \pi j_{0,1}^2
\end{equation} where $j_{0,1}$ is the first zero of the Bessel function of order zero; $j_{0,1}^2$ is the first Laplace-Dirichlet eigenvalue of the ball in 2D. 

 Under volume constraint, $\lambda_2$ is minimized by two identical balls. If we constrain instead the {\it perimeter} of $\Omega$, the minimizer of $\lambda_2$ is a regular convex domain whose boundary curvature vanishes at exactly two points.  The question of which constraints to impose for meaningful optimization problems for the Steklov-Helmholtz spectra is more delicate. For the (closely related) Robin spectral problems with eigenvalues $\mu_k$, a scale invariant quantity of interest is
$$ |\Omega|\mu_k(\sigma/|\Gamma|,\Omega );$$ other such quantities exist.  Observe that various scale-invariant quantities for the eigenvalues $\sigma(\Omega;\mu)$ of \eqref{eq:steklov-helmholtz} can be written in the form
\begin{equation}\label{eq:scale-invariance}
E_k(\mu,\Omega):= |\Omega|^{\alpha} |\Gamma|^\beta \sigma_k\left(\frac{\mu}{ |\Omega|^\gamma |\Gamma|^\delta},\Omega\right)
\end{equation}
where $(2\gamma+\delta)=1, (2\alpha+\beta)=1$. These constraints on $\alpha,\beta,\gamma$ and $\delta$ follow through dimensional arguments. 

During an optimization procedure over admissable domains, a given wavenumber $\mu$ may become an exceptional value, and then the quality of computed Steklov-Helmholtz eigenvalues degenerates. It is here the wavenumber-robust nature of our approach is particularly useful:  without the regularization approach we used, optimization routines tend to fail. In what follows, we discuss two specific scale invariant quantities where we restrict the wavenumber $\mu$ to a certain range with the help of the Faber-Krahn inequality. In this range there are no exceptional wavenumbers and in particular, the wavenumbers in this range are smaller than the first scaled Dirichlet-Laplace eigenvalue. From Theorem 7.4.2 in \cite{Levitin23} we then have that the admissible set is in $H^1(\Omega)$. Using a constant test function in the variational characterization of $\sigma_1$ described in {\sc Problem Ia'} we readily get
\begin{equation}\label{eq:bound1}
E_1\left(\mu,\Omega\right)\leq |\Omega|^{\alpha} |\Gamma|^\beta\frac{\int_\Omega \left[ - \frac{\mu^2}{|\Omega|^{2\gamma}|\Gamma|^{2\delta}}\right] }{\int_{\Gamma}1} = -\mu^2 |\Omega|^{\alpha-2\gamma+1}|\Gamma|^{\beta-2\delta-1}.
\end{equation}

\subsubsection{Perimeter constrained}
In the first set of experiments, we fix $\delta=1$, $\gamma=0$, $\alpha=0,\beta=1$. The objective function we consider is
\[
G_k(\mu,\Omega) := |\Gamma|\sigma_k\left(\frac{\mu}{|\Gamma|},\Omega\right).
\] We let $\Omega$ be an ellipse of eccentricity $e$. For a fixed wavenumber, therefore, $G_k(\mu,\Omega)$ is a function of $e$ for each $k$; these are plotted in the  top row of \autoref{fig:fig11b}. 
In these experiments, the (scaled) wavenumbers $\frac{\mu}{|\Gamma|}$ become exceptional if the $kth$ Dirichlet eigenvalue of $\lambda_k(\Omega) = \frac{\mu^2}{|\Gamma|^2}$. 

 From the Faber-Krahn inequality, we know that $|\Omega| \lambda_1(\Omega) \geq \pi j_{0,1}^2$. From the isoperimetric inequality, we know that $|\Gamma|^2  \geq {4\pi}|\Omega|.$ Putting these facts together, we see that if 
 $$ \frac{\mu_*^2}{|\Gamma|^2}\geq \lambda_1(\Omega) \geq \pi j^2_{0,1}\frac{1}{|\Omega|}\geq 4 \pi^2 j_{0,1}^2 \frac{1}{|\Gamma|^2},$$ then the range of wavenumbers $(0,\mu_*]$ includes at least one  exceptional wave number. Put differently, restricting $\mu \in  (0, 2\pi j_{0,1})  \approx (0, 15.104)$ will ensure we do not cross an exceptional wave number. From \autoref{eq:bound1} we then have $G_1(\mu,\Omega) \leq -\mu^2\tfrac{|\Omega|}{|\Gamma|^2}$.
 \begin{figure}
 	\centering 	\includegraphics[width=0.9\textwidth]{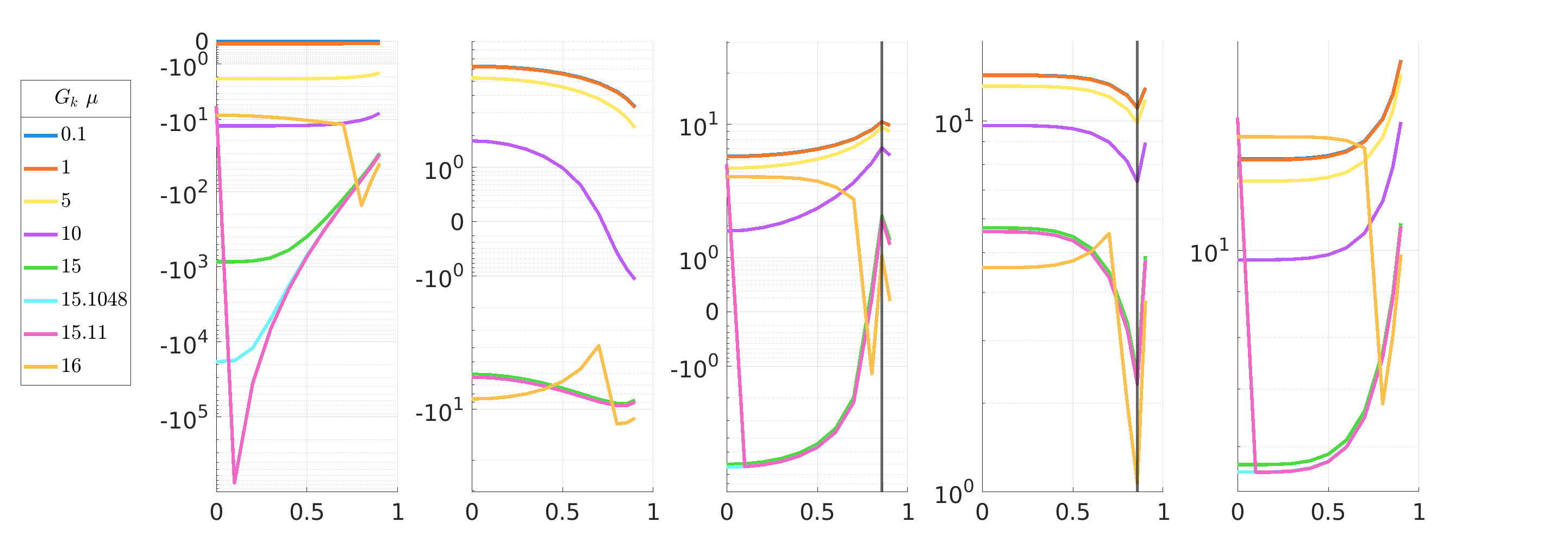} 	\includegraphics[width=0.9\textwidth]{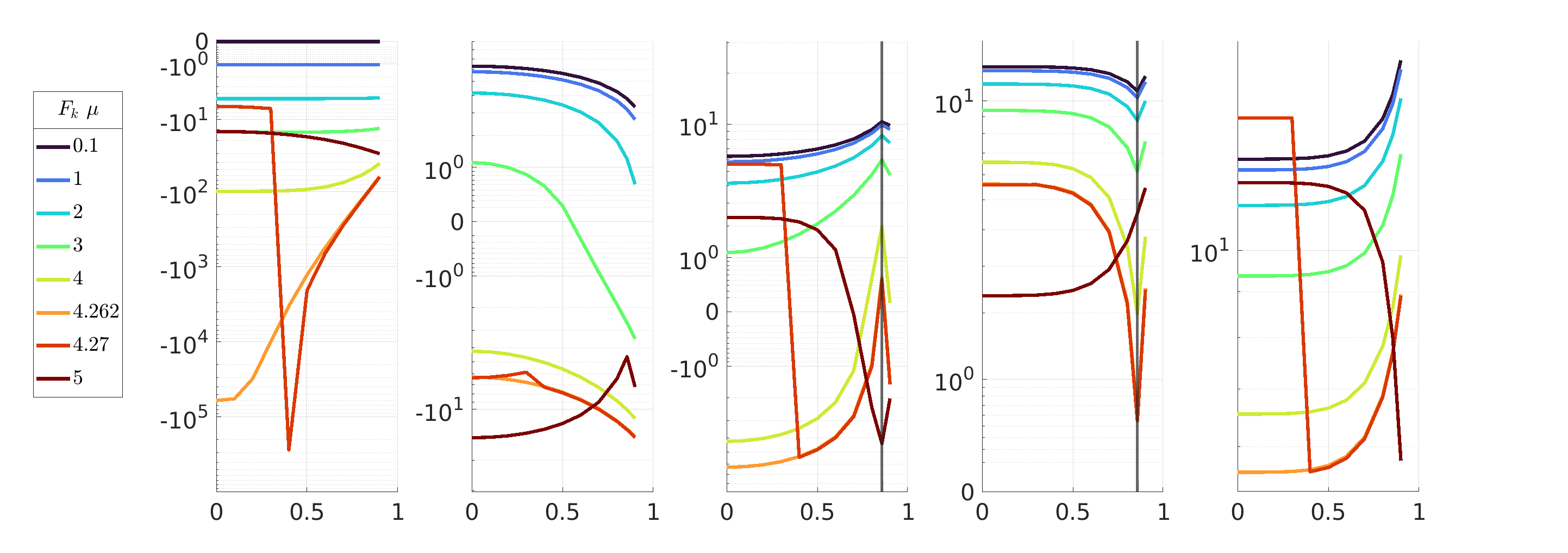} 	 	\caption{The spectral parameters (Top) $G_k(\mu,\Omega)$ and (Bottom) $F_k(\mu,\Omega)$ for $k = 1,\cdots,5$ on ellipses (parameterized by eccentricity $e$).}
 	\label{fig:fig11b}
 \end{figure}
\subsubsection{Volume constrained}
In the next set of experiments we report, we  fix $\delta=0$, $\gamma=1/2$, $\alpha=0,\beta=1$.  We optimize the spectral quantity
\[
F_k(\mu,\Omega) := |\Gamma|\sigma_k\left(\frac{\mu}{\sqrt{|\Omega|}},\Omega\right).
\]

Using the Faber-Krahn inequality, we restrict $\mu^2 \in (0,\pi j_{0,1}^2),$ thereby avoiding exceptional values. Noting that $j_{0,1}\sqrt{\pi} \approx 4.262 $, we constrain the wavenumber $\mu \in (0,4.26)$. Similarly to the previous subsection, we first  let $\Omega$ be an ellipse of eccentricity $e$. We plot $F_k(\mu,\Omega)$ as functions of eccentricity in the bottom row of \autoref{fig:fig11b}. Again using \autoref{eq:bound1} we get $F_1(\mu,\Omega) \leq -\mu^2$.

We observe in \autoref{fig:fig11b} that the behaviour of both $G_k$ and $F_k$ is rather unpredictable for wavenumbers outside of the restrictions i.e $\mu_G > 15.1048\cdots$ and $\mu_F > 4.262\cdots$. We also note that local optima are observed for both $G_k,F_k,~k=3,4$ at eccentricty $e \approx 0.855$ (see vertical line in columns 3,4 of \autoref{fig:fig11b}) independent of $\mu$. 


Through extensive numerical experiments, we establish that the disk $\bb D$ is not a minimizer of $F_1(\mu,\Omega)$ (see \autoref{fig:f1_min} below) for $\mu \in (0, \sqrt{\pi}j_{0,1})$. In other words, a Faber-Krahn like inequality with these constraint is not true. As a counterexample, we numerically observe that a simple non-convex family of kite type domains $\tilde\Omega$ whose boundary is parametrized by \autoref{eq:kite} has $F_1(\mu,\tilde\Omega)- F_1(\mu,\bb D) < 0$.

\begin{figure}
		\centering
		\includegraphics[width=0.45\textwidth]{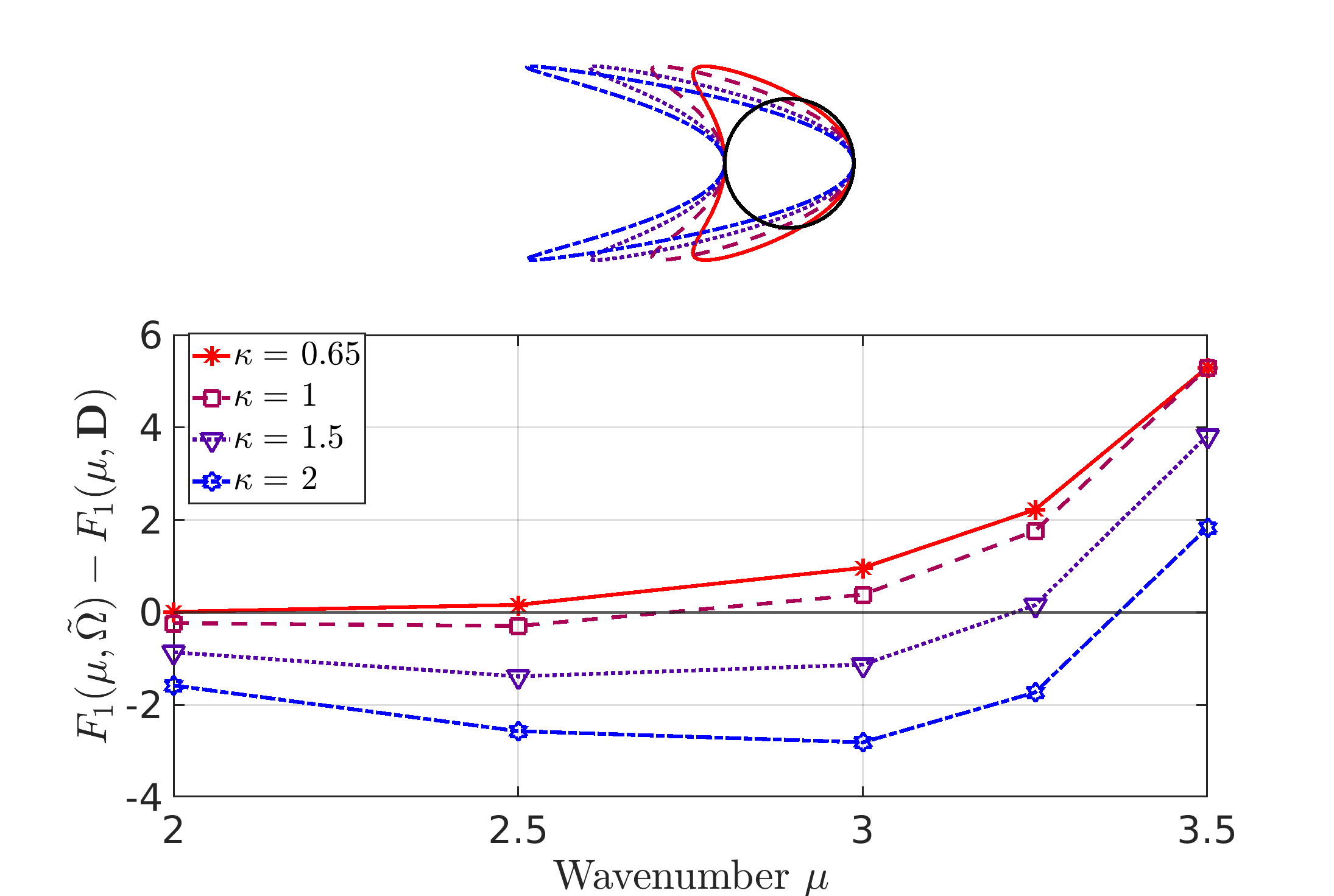}        
	\caption[]                        {{ The unit disk $\mathbb{D}$ does not minimize $F_1(\mu,\Omega) := |\Gamma|\sigma_1\left(\frac{\mu}{\sqrt{|\Omega|}},\Omega\right)$. 
	 Plotted are the differences $F_1(\mu,\tilde{\Omega})-F_1(\mu,\bb D)$ as a function of wavenumber $\mu$, for 4 kite-shaped domains $\tilde{\Omega}$ shown.	The black curve corresponds to the unit disk $\bb D$. See \autoref{eq:kite} for the domain parametrization and shape parameter $\kappa$}.}  
	\label{fig:f1_min}
\end{figure}
The spectral quantity $F_2(\mu, \Omega)$ is optimized by the unit disk $\bb D$ for $\mu \in (0,\lambda_1^*)$. We record numerical experiments using the BIO-MOD approach and prove this result following arguments very similar to \cite{FL20}. 

On the unit disk, when $0\leq\mu \leq \sqrt{\pi }j_{0,1} \approx 4.262$, the eigenvalues in ascending order are 
\[
\sigma_k(\mu,\mathbb{D}) = \mu \frac{J'_{k-1}(\mu)}{J_{k-1}(\mu)}.
\]
Therefore,
\[
F_2(\mu,\bb D) := |\partial\mathbb{D}|\sigma_2\left(\frac{\mu}{\sqrt{|\mathbb{D}|}},\mathbb{D}\right) = 2\mu\sqrt{\pi}\frac{J'_{1}(\mu/\sqrt{\pi})}{J_{1}(\mu/\sqrt{\pi})}. 
\]

We now describe  some numerical experiments concerning the optimization of $F_k(\Omega,\mu)$ over a class of domains whose boundaries are parametrized as  $\Gamma:=\Gamma \equiv r(t) = a_0 + \sum_{j=1}^n a_j \cos jt + b_j \sin jt, t \in[0, 2\pi)$. We set  $a_0 > \sum_j |a_j| + |b_j|$ to
avoid self-intersections of the boundary. Without loss of generality we constrain $|a_j |,|b_j | \leq 0.1$ The numerical optimization is via a gradient-free particle swarm implementation in \texttt{Matlab}, \cite{swarm, MezuraMontes2011ConstrainthandlingIN}. At each iterate $n$ of the optimization algorithm, the objective function $F_k(\mu,\Omega_n)$ for the current domain iterate $\Omega_n$ is computed using the BIO-MOD algorithm. Domain volumes and perimeters are computed using high-accuracy quadrature. 
We performed optimization experiments for $\mu = \pi,4.2$, where $F_2(\pi,\bb D) = 0.541834100559795$ and $F_2(4.2,\bb D) = -5.762316721805390$. We observe that for both values of $\mu$ and $n = 2,4,6,8$ the optimizers converge to (near) disk-like domains $\tilde{\bb D}$ with $F_2(\mu,\tilde{\bb D}) \approx F_2(\mu,\bb D)$.

Motivated by these experiments, we can show \begin{theorem}
	Let $\mu < \sqrt{\pi}j_{0,1}$. If $\Omega$ is a Jordan-Lipschitz domain then $F_2(\mu,\Omega)$ is maximized by the disk $\bb D$.
\end{theorem}
The proof is deferred to the Appendix. We follow a similar optimization approach for $F_3(\mu,\Omega)$, \autoref{newfig14}.
\begin{figure}
	\centering
	\includegraphics[width=0.6\linewidth]{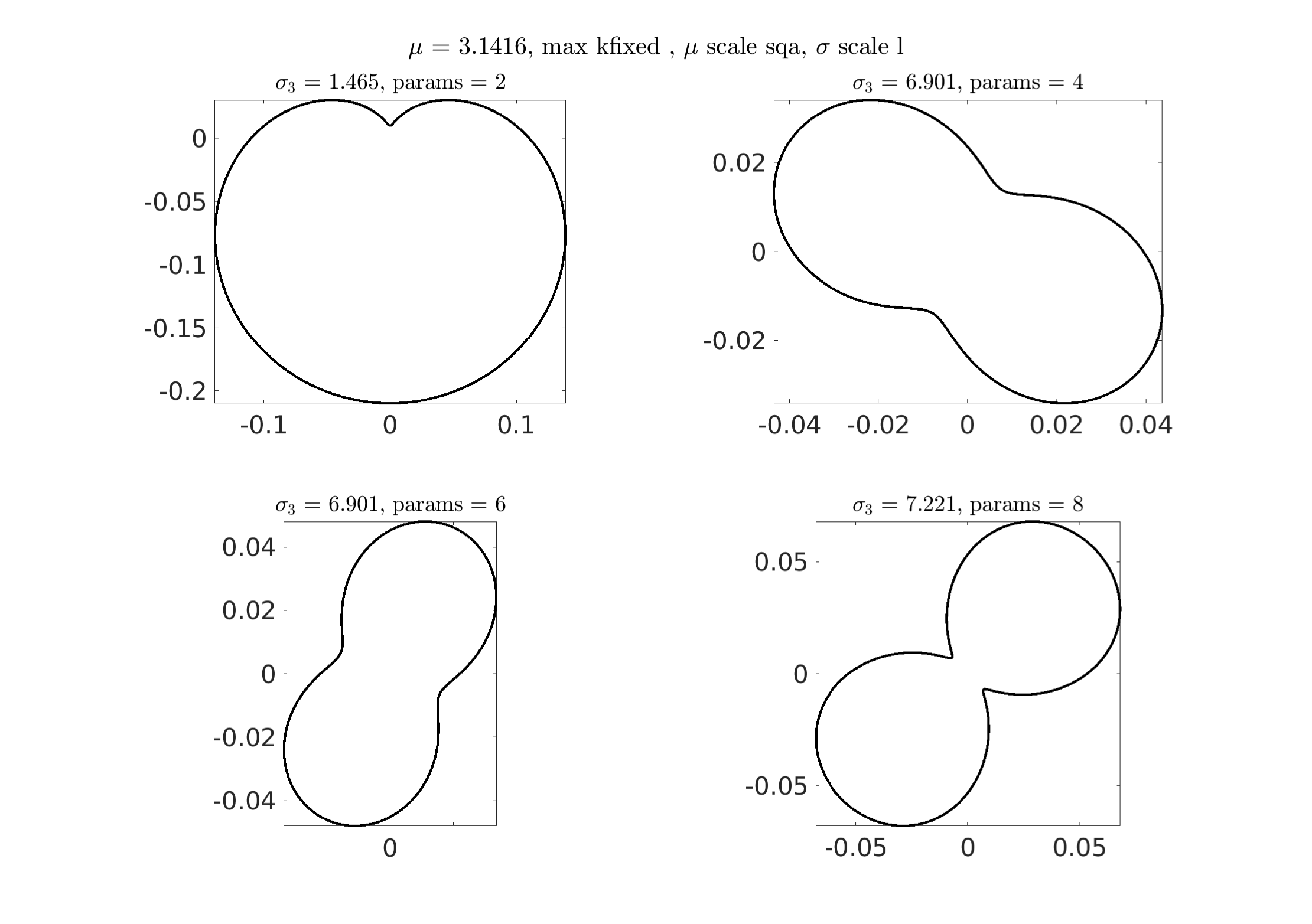}
	\caption{Domains with boundary of form $r(t) = a_0 + \sum_{j=1}^n a_j \cos jt + b_j \sin jt, t \in[0, 2\pi)$   that maximize $F_3(\mu,\Omega)$ for $\mu=3.1416$. As we let the number of shape parameters $n$  increase, the domains approach two disjoint disks. For reference $F_3(\mu,\Omega)=7.273191556454688$ if $\Omega$ is the union of two disjoint disks.}
	\label{newfig14}
\end{figure}

\subsection{Annular domains}
We recall the famous Weinstock inequality for the first Steklov eigenvalue (for the Laplacian, with perimeter constraint) $$ |\Gamma|\sigma(\Omega) \leq |\partial \mathbb{D}| \sigma(\mathbb{D}),$$ fails for multiply-connected domains.

Using the BIO-MOD approach, we can easily compute the Steklov-Helmholtz spectrum on domains of genus 1 (see Section 3.3 in \cite{kshitijMS}). On an annular ring $\bb A_\epsilon$, we hold the outer radius $R_{out}=1$ fixed and vary the inner radius $\epsilon$ from $0.9\to 1e-3$. \autoref{fig:annulusf1} shows a variant of the Weinstock inequality for the first Steklov-Helmholtz eigenvalue (again with perimeter constraint) is also violated. Concretely, we see
$F_1(\mu,A_\epsilon) \geq F_1(\mu,\mathbb{D})$.
\begin{figure}
	\centering
	\includegraphics[width=0.6\linewidth]{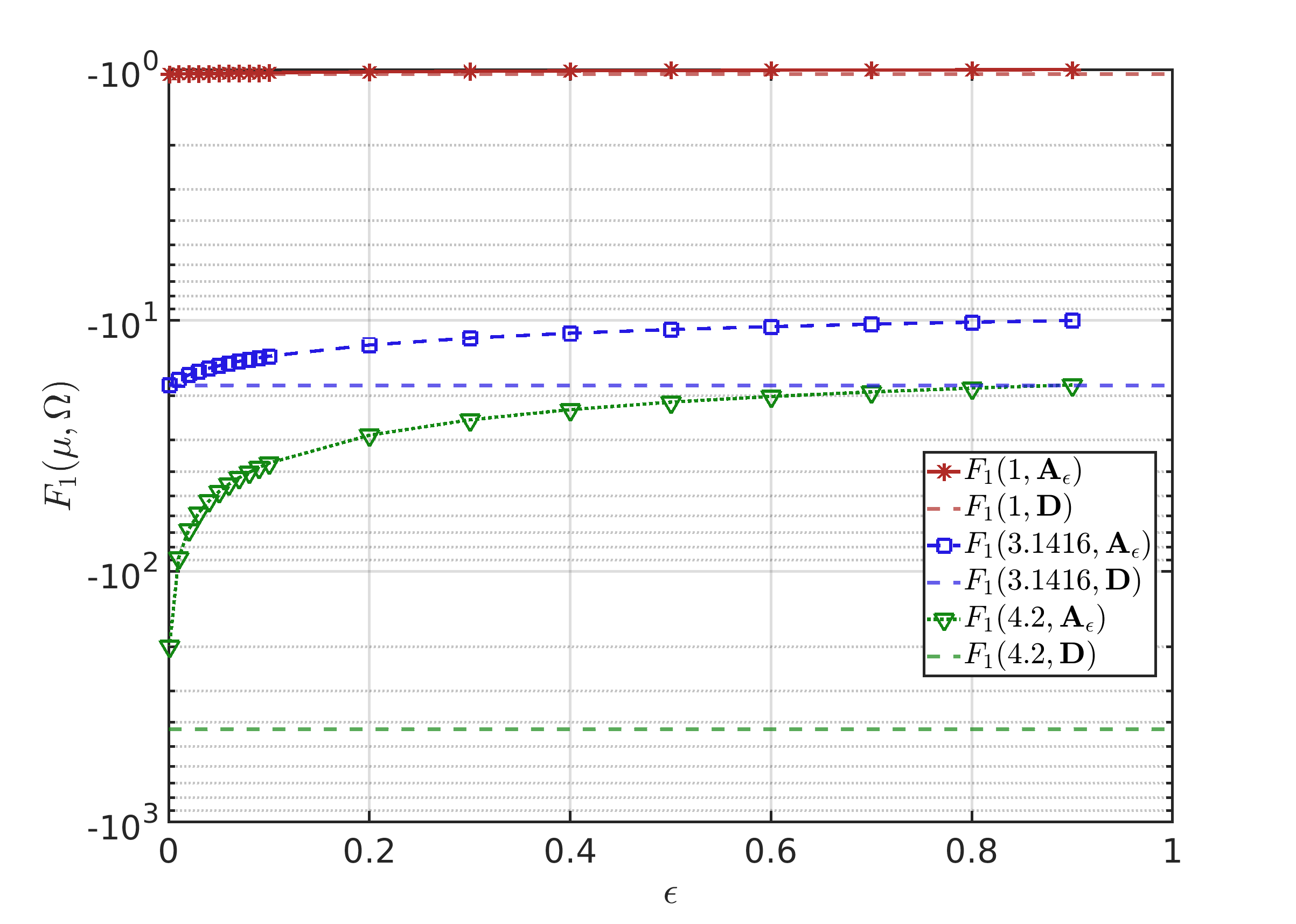}
	\caption{$F_1(\mu,A_\epsilon)$ on annular domains $A_\epsilon$. The inner radius $\epsilon$ is on the x-axis.}
	\label{fig:annulusf1}
\end{figure}

More interesting is the behaviour of $F_2(\mu,A_\epsilon)$, which is shown in \autoref{fig:annulus2}. We propose, therefore:
\paragraph{Open Problem:} For each $\mu \in (0,\pi j_{0,1}^2)$, there is some $\epsilon(\mu)$ such that 
$$ F_2(\mu,A_\epsilon)\geq F_2(\mu,\mathbb{D}), \qquad \forall \epsilon\in (0,\epsilon_\mu).$$ 
\begin{figure}
	\centering
	\includegraphics[width=0.495\linewidth]{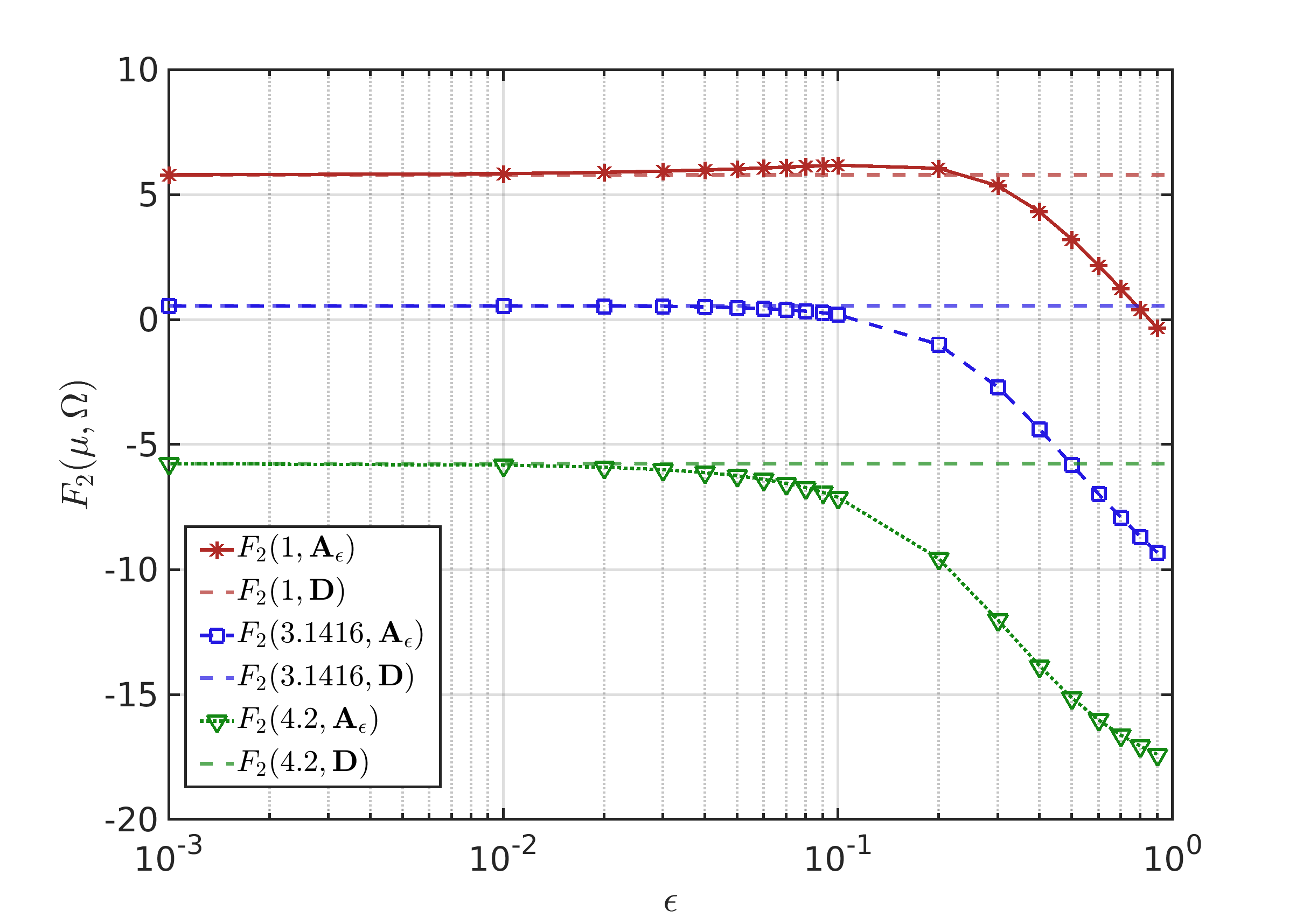}
		\includegraphics[width=0.495\linewidth]{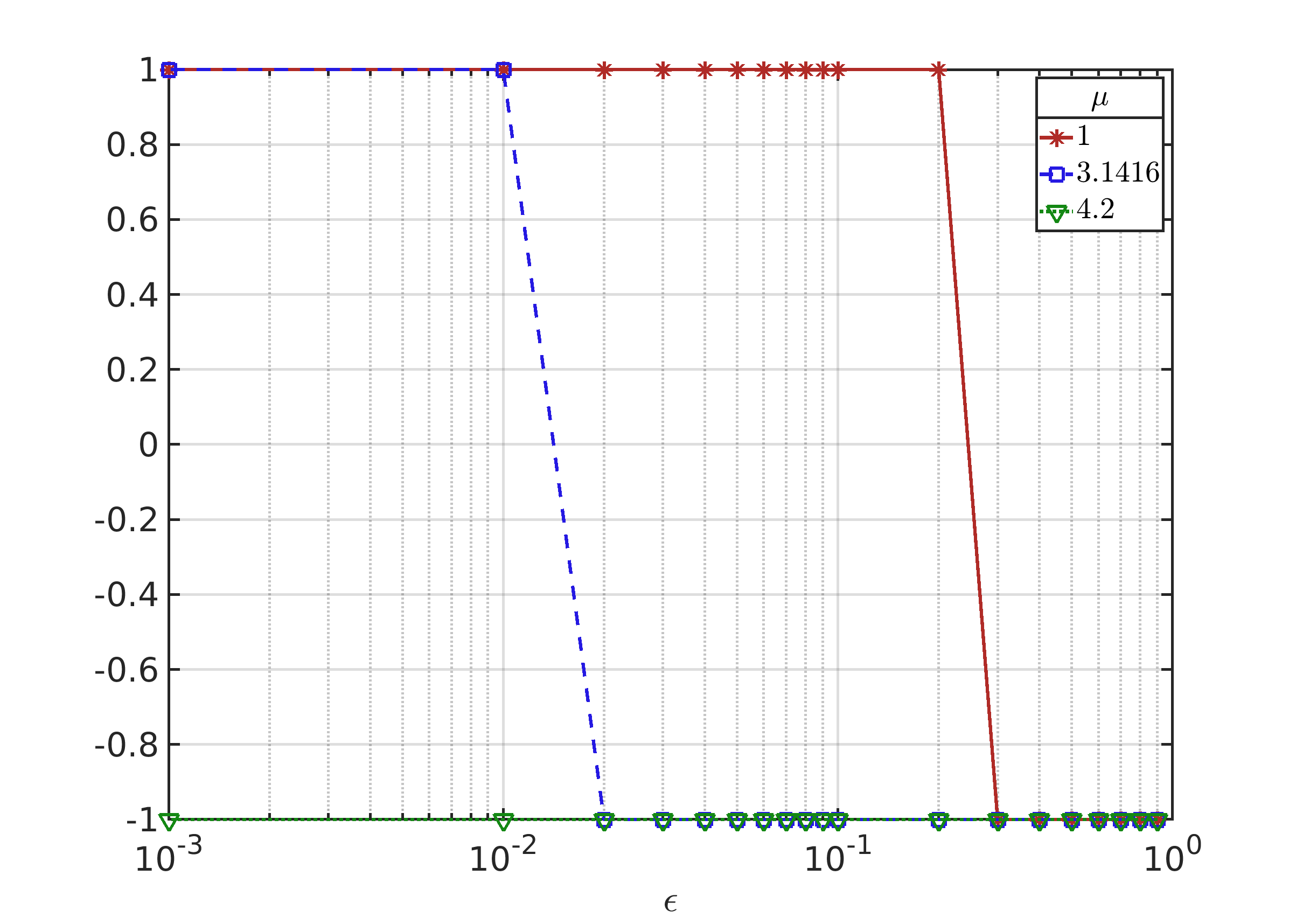}
	\caption{(L) $F_2(\mu, A_\epsilon)$ as a function the inner radius $\epsilon$. Also recorded is $F_2(\mu,\mathbb{D})$. (R) Sign of the function $F_2(\mu,A_\epsilon)-F_2(\mu,\mathbb{D})$.}
	\label{fig:annulus2}
\end{figure}

\section{Conclusion}
In this paper we present a wavenumber-robust approach for computing the Steklov-Helmholtz eigenvalues for planar domains using an indirect layer potential approach. For domains with smooth boundary the eigenvalues are computed with spectral accuracy. We used the approach to study some questions in the spectral geometry of such eigenvalue problems. We present numerical experiments concerning both perimeter-constrained and 
volume-constrained spectral optimization.

Motivated by some of our observations, in future work we will closely study the asymptotic behaviour of the Steklov-Helmholtz eigenfunctions near corners, and incorporate this information to design specialized quadratures for this problem.

\bibliographystyle{siamplain}
\bibliography{NPS.bib}

\section{{Appendix}}

\begin{theorem}
	Let $\mu < \sqrt{\pi}j_{0,1}$. If $\Omega$ is a Jordan-Lipschitz domain then $F_2(\mu,\Omega)$ is maximized by the disk $\bb D$.
\end{theorem}

\noindent
From proposition 7.4.4 of \cite{Levitin23} we know that $\mu^2$ is an eigenvalue of the Robin-Laplacian if and only if $\sigma$ is an eigenvalue of the Steklov-Helmholtz problem. Therefore it is not surprising that under similar scalings, our result is analogous to Theorem~B in \cite{FL20} which concerns the Robin-Laplace eigenvalue problem. To aid our proof we first recall some useful facts about the disk $\bb D$:  
\begin{enumerate}
	\item For the disk, 
	\[
	F_2(\mu,\bb D) = 2\mu\sqrt{\pi}\,\frac{J'_{1}(\mu/\sqrt{\pi})}{J_{1}(\mu/\sqrt{\pi})},
	\]
	and the second eigenvalue has multiplicity $2$.
	\item The corresponding eigenfunctions are
	\[
	u_1(r,\theta) = g(r)\cos\theta,\qquad u_2(r,\theta) = g(r)\sin\theta,
	\]
	where $g(r) = J_1\!\left(\tfrac{\mu}{\sqrt{\pi}}r\right)$.
	\item The radial function $g$ satisfies $g(0)=0 \leq g(1)$ and 
	$g'(r) = \tfrac{\mu}{\sqrt\pi}J'_1\!\left(\tfrac{\mu}{\sqrt{\pi}}r\right)$.  
	A critical point occurs at $r^*$ with $\mu r^*/\sqrt{\pi} = j'_{1,1}$, i.e.\ $r^* = j'_{1,1}\sqrt{\pi}/\mu$.  
	Thus $g'>0$ on $(0,r^*)$ and $g'<0$ on $(r^*,1)$ if $\mu> j'_{1,1}\sqrt{\pi}$, while $g'>0$ on $(0,1)$ if $\mu \leq j'_{1,1}\sqrt{\pi}$. These properties relate to Proposition 5 in \cite{FL20} which is essential for the proof. 
\end{enumerate}

\medskip

\noindent
Next, we recall that in the regime $\mu < \sqrt{\pi}j_{0,1}$ the admissible set in the Rayleigh quotient for the Steklov-Helmholtz eigenvalues is $H^1(\Omega)$. From Theorem~7.4.2 in \cite{Levitin23}, we have
\begin{equation}\label{eqn:rq1}
	\sigma_k\!\left(\tfrac{\mu}{\sqrt{|\Omega|}},\Omega\right) 
	= \min_{\substack{\mathcal{L}\subset H^1(\Omega)\\\dim\mathcal{L}=k}}
	\;\max_{0\neq v\in \mathcal{L}}
	\frac{\int_\Omega |\nabla v|^2-\tfrac{\mu^2}{|\Omega|}|v|^2}{\int_{\Gamma}|v|^2}.
\end{equation}

\noindent
The explicit eigenfunctions $u_1,u_2$ on the disk can be used as test functions after conformal mapping to $\Omega$ and the Rayleigh quotient characterization \eqref{eqn:rq1} allows us to estimate $\sigma_2(\Omega,\mu/\sqrt{|\Omega|})$ using these test functions.  

\begin{proof}
By the center-of-mass lemma (Lemma~6 in \cite{FL20}), a conformal map $f:\bb D\to\Omega$ can be chosen such that $v_i = u_i\circ f^{-1}$ ($i=1,2$) are orthogonal to the first eigenfunction $v_0$ of $\Omega$. Since $v_i$ are smooth and bounded, they lie in $H^1(\Omega)$ and are valid test functions for the Rayleigh quotient~\eqref{eqn:rq1}. Using conformal invariance of the Dirichlet integral, we compute
\begin{align*}
	\sigma_2\!\left(\tfrac{\mu}{\sqrt{|\Omega|}},\Omega\right) 
	&\leq \frac{\int_\Omega |\nabla v_1|^2-\frac{\mu^2}{|\Omega|}|v_1|^2}{\int_{\Gamma}|v_1|^2} \\
	&= \frac{\int_{\bb D} |\nabla u_1|^2-\frac{\mu^2}{|\Omega|}|u_1|^2|f'|^2}{\int_{\Gamma}|v_1|^2}.
\end{align*}
We have, $u_1(r,\theta)=g(r)\cos\theta$ and $v_1=u_1\circ f^{-1}$. Now observe that on the boundary $\Gamma$, 
\[
v_1|_{\Gamma} = u_1(f^{-1}(x)) = g(1)\cos\!\big(\theta(f^{-1}(x))\big),
\qquad x\in\Gamma,
\]
where $\theta$ denotes the polar angle on $\partial\bb D$. Then, 
\[
v_1|_{\Gamma} = g(1)\cos\theta.
\]
Hence the denominator simplifies to $g(1)^2\int_{\Gamma}\cos^2\theta$, and we obtain
\[
\bigg[g(1)^2\!\int_{\Gamma}\cos^2\theta\bigg]\,
\sigma_2\!\left(\tfrac{\mu}{\sqrt{|\Omega|}},\Omega\right) 
\leq \int_{\bb D}\!|\nabla u_1|^2-\tfrac{\mu^2}{|\Omega|}|u_1|^2|f'|^2.
\]
A similar estimate holds for $
v_2|_{\Gamma}=g(1)\sin\theta.
$ Adding the two inequalities (for $v_1$ and $v_2$) yields
\[
g(1)^2F_2(\mu,\Omega) \leq \int_{\bb D}\Big(g'(r)^2 + \frac{1}{r^2}g(r)^2 - \tfrac{\mu^2}{|\Omega|}g(r)^2|f'|^2\Big).
\]
For the disk, equality holds:
\[
g(1)^2F_2(\mu,\bb D) = \int_{\bb D}\Big(g'(r)^2 + \frac{1}{r^2}g(r)^2 - \tfrac{\mu^2}{|\Omega|}g(r)^2\Big).
\]
Thus the theorem reduces to proving
\[
\int_{\bb D} |f'|^2 g(r)^2 \;\geq\; \int_{\bb D} g(r)^2,
\]
which is precisely the inequality established in Theorem~B of \cite{FL20}. Point~3 above ensures applicability of the proof of this inequality.
\end{proof}

\end{document}